\documentclass[reqno,twoside]{amsart}

\usepackage{amsfonts}
\usepackage{amsmath}
\usepackage{amsthm}
\usepackage[utf8]{inputenc}
\usepackage[english]{babel}
\usepackage{epstopdf}
\usepackage{bmpsize}
\usepackage{epsfig}
\usepackage{hyperref}
\usepackage[english]{varioref}
\usepackage{amssymb}
\usepackage{multicol}
\usepackage{dcolumn}
\usepackage{geometry}
\usepackage{fancyhdr}
\usepackage[mathcal]{eucal}
\usepackage{mathrsfs}
\usepackage{color, colortbl}
\usepackage{microtype}
\usepackage{longtable}
\usepackage[toc,page]{appendix}
\usepackage{bm}
\usepackage{pifont}
\usepackage{fleqn}
\usepackage{graphicx}
\usepackage{txfonts}
\usepackage{fullpage}
\usepackage{tikz}
\usepackage{pgfplots}
\usepackage{pgfplotstable}

\usetikzlibrary{external}

\DisableLigatures{encoding = *, family = * }

\newcommand{\D}{\displaystyle}
\newcommand{\norm}[2]{\left\|#1\right\|_{#2}}


\newcommand{\R}{\mathbb{R}}

\newcommand{\intd}{\displaystyle\int \kern -7pt \int}

\newtheorem{theorem}{\bf Theorem}[section]
\newtheorem{remark}{\bf Remark}[section]
\newtheorem{proposition}{\bf Proposition}[section]

\newtheorem{lemma}{\bf Lemma}[section]

\title{Null controllability of linear and semilinear nonlocal heat equations with integral kernel}

\author{Umberto Biccari\textsuperscript{1}}  
\address{\textsuperscript{1,2}\,DeustoTech, University of Deusto, 48007 Bilbao, Basque Country, Spain.}
\address{\textsuperscript{1,2}\,Facultad de Ingenier\'ia, Universidad de Deusto, Avenida de las Universidades 24, 48007 Bilbao, Basque Country, Spain.}
\email{umberto.biccari@deusto.es, u.biccari@gmail.com, victor.santamaria@deusto.es}

\author{V\'ictor Hern\'andez-Santamar\'ia\textsuperscript{2}}

\keywords{Null controllability, heat equation, nonlocal terms, Carleman inequalities, linear and semilinear systems}
\subjclass[2010]{35K58,\,93B05,\,93B07,\,93C20}

\bibliographystyle{abbrv}

\begin{document}           

\begin{abstract}
We consider a linear nonlocal heat equation in a bounded domain $\Omega\subset\R^d$ with Dirichlet boundary conditions. The non-locality is given by the presence of an integral kernel. We analyze the problem of controllability when the control acts on an open subset of the domain. It is by now known that the system is null-controllable when the kernel is time-independent and analytic or, in the one-dimensional case, in separated variables. In this paper, we relax this assumption and we extend the result to a more general class of kernels. Moreover, we get explicit estimates on the cost of null-controllability that allow us to extend the result to some semilinear models.
\end{abstract}

\maketitle

\section{Introduction and main results}\label{intro_sec}
Let $\Omega$ be a bounded domain of $\R^d$ with boundary of class $C^2$. Given $T>0$, we set $Q:=\Omega\times (0,T)$ and $\Sigma:=\partial\Omega\times (0,T)$. Let $K=K(x,\theta,t)\in L^\infty(\Omega\times\Omega\times(0,T))$.  We consider the following linear parabolic equation involving a nonlocal space term. 
\begin{align}\label{e1s1}
	\begin{cases}
		\D y_t - \Delta y + \int_\Omega K(x,\theta,t)y(\theta,t)\,d\theta = v\mathbf{1}_{\mathcal O}, & (x,t)\in Q
		\\
		y = 0, & (x,t)\in\Sigma
		\\
		y(x,0) = y_0(x), & x\in\Omega.
	\end{cases}
\end{align}

In (\ref{e1s1}), $y=y(x,t)$ is the state and $v=v(x,t)$ is the control. The latter acts on the system through the non-empty open subset ${\mathcal O}\subset\Omega$. Here, $\mathbf{1}_{\mathcal O}$ denotes the characteristic function of $\mathcal O$. 

We assume that $y_0\in L^2(\Omega)$ and $v\in L^2(\mathcal O\times(0,T))$, so that system \eqref{e1s1} admits a unique solution $y$ in the class
\begin{align}\label{y_reg}
	y\in L^2(0,T;H^{1}_{0}(\Omega))\cap H^1(0,T;H^{-1}(\Omega)),
\end{align}
which satisfies classical energy estimates. Actually, this remains true also if $v\mathbf{1}_{\mathcal O}$ is replaced by a general right-hand side $f\in L^2(0,T;H^{-1}(\Omega))$.

We are interested in proving the null controllability of the problem under analysis. In other words, we want to show that there exists a control function $v\in L^2(\mathcal O\times(0,T))$ such that the corresponding solution $y$ to \eqref{e1s1} satisfies $y(x,T)=0$ for all $T>0$.

Moreover, as mentioned in \cite{fernandez2016null}, the study of the controllability of \eqref{e1s1} is motivated by many relevant applications from physics and biology. See, for instance, \cite[Section 7.9.2]{okubo2013diffusion}, where this kind of equations is used in the study of group dynamics, for modeling the possibility of interactions between individuals that are separated in space. 

It is well known that system \eqref{e1s1} is null controllable at least in two cases.
\begin{itemize}
	\item When the kernel is time-independent and analytic, one can exploit unique continuation properties and use compactness-uniqueness arguments (\cite{fernandez2016null}). In this framework, also coupled systems have been recently treated in \cite{lissy2018internal}.
	\item When the problem is one-dimensional and the kernel is time-independent and in separated variables, the controllability follows employing spectral analysis techniques (\cite{micu2018local}).
\end{itemize}

In the present paper, by means of a Carleman approach, we are able to extend the above mentioned results by considering a problem in any space dimension and by weakening the assumptions on the kernel. In particular, we will only need ${K=K(x,\theta,t)}$ to be bounded and to have an exponential decay at the extrema of the time interval $[0,T]$. This is summarized in the following condition:
\begin{align}\label{K_est_gen}
	\mathcal{K}=:\sup_{(x,t)\in\overline{Q}} \exp\left(\frac{\sigma^-}{t(T-t)}\right)\int_{\Omega} |K(x,\theta,t)|\,d\theta < +\infty, \tag{$\mathcal{H}$}
\end{align}
where the constant $\sigma^-$ depends only on $\Omega$ and $\mathcal{O}$ and is related to the Carleman weight (see \eqref{notation_alpha}). Our first main result will then be the following.

\begin{theorem}\label{control_thm} 
Suppose that the kernel $K=K(x,\theta,t)\in L^\infty(\Omega\times\Omega\times(0,T))$ satisfies \eqref{K_est_gen}. Then, given $y_0\in L^2(\Omega)$ and $T>0$, there exists a control function $v\in L^2(\mathcal{O}\times (0,T))$ such that the corresponding solution to \eqref{e1s1} satisfies $y(x,T)=0$.
\end{theorem}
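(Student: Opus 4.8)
The plan is to reduce the controllability problem to an observability inequality for the adjoint system and to establish the latter through a global Carleman estimate in which the nonlocal term is treated as a source whose decay at the endpoints of $[0,T]$, encoded in \eqref{K_est_gen}, is used to re-absorb it into the dominant Carleman term. First, by the classical duality (penalized HUM) argument, Theorem \ref{control_thm} is equivalent to proving the observability estimate
$$\norm{\varphi(\cdot,0)}{L^2(\Omega)}^2 \le C\int_0^T\!\!\int_{\mathcal O}|\varphi|^2\,dx\,dt$$
for every solution $\varphi$ of the adjoint problem
$$\begin{cases}-\varphi_t-\Delta\varphi+\D\int_\Omega K(\theta,x,t)\varphi(\theta,t)\,d\theta=0,&(x,t)\in Q,\\ \varphi=0,&(x,t)\in\Sigma,\\ \varphi(\cdot,T)=\varphi_T,&x\in\Omega,\end{cases}$$
where the kernel now appears with its two space variables interchanged. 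I would aim to obtain this estimate together with an explicit constant $C=C(\Omega,\mathcal O,T,\mathcal{K})$, which is precisely what later yields the cost bound needed for the semilinear extension.

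The main tool is the standard global Carleman inequality for the heat operator $-\partial_t-\Delta$, with weights $\alpha$ and $\xi$ built (see \eqref{notation_alpha}) from an auxiliary function $\eta$ having no critical points in $\overline\Omega\setminus\mathcal O$. Writing the adjoint equation as $-\varphi_t-\Delta\varphi=g$ with source $g(x,t)=-\int_\Omega K(\theta,x,t)\varphi(\theta,t)\,d\theta$, this inequality reads, schematically,
$$\int_0^T\!\!\int_\Omega e^{-2s\alpha}\Big(s^3\xi^3|\varphi|^2+s\xi|\nabla\varphi|^2\Big)\,dx\,dt\le C\Big(\int_0^T\!\!\int_\Omega e^{-2s\alpha}|g|^2\,dx\,dt+\int_0^T\!\!\int_{\mathcal O}s^3\xi^3 e^{-2s\alpha}|\varphi|^2\,dx\,dt\Big),$$
valid for all $s,\lambda$ large. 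The decisive step is to show that the weighted source term on the right can be swallowed by the first term on the left.

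Carrying out this absorption is where hypothesis \eqref{K_est_gen} enters, and it is the main difficulty: the nonlocal zeroth-order term is \emph{not} a lower-order perturbation that could be killed merely by enlarging $s$, as one would do for a local potential. Using Cauchy--Schwarz in $\theta$, Fubini, the bound $e^{-2s\alpha}\le 1$ and $K\in L^\infty$, I would estimate
$$\int_\Omega e^{-2s\alpha}|g|^2\,dx\le \norm{K}{L^\infty}|\Omega|\int_\Omega|\varphi(\theta,t)|^2\Big(\int_\Omega|K(\theta,x,t)|\,dx\Big)d\theta\le C\,e^{-\sigma^-/(t(T-t))}\norm{\varphi(\cdot,t)}{L^2(\Omega)}^2,$$
the last inequality being exactly \eqref{K_est_gen}. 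The resulting integral carries no Carleman weight on $\varphi$; to reinsert one I write $|\varphi|^2=e^{2s\alpha(\theta,t)}e^{-2s\alpha(\theta,t)}|\varphi|^2$. The point of the hypothesis is that the decay rate $\sigma^-$ is the one attached to the weight in \eqref{notation_alpha} and is calibrated so that, once $\lambda$ and $s$ are frozen, $e^{-\sigma^-/(t(T-t))}$ dominates the inverse weight $e^{2s\alpha(\theta,t)}$ uniformly in $\theta$. This turns the right-hand side into $\epsilon\int_\Omega s^3\xi^3 e^{-2s\alpha}|\varphi|^2\,d\theta$ with $\epsilon$ as small as desired, and hence into a quantity absorbed by the leading term on the left.

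With the source absorbed, the Carleman inequality holds for the full adjoint operator, and the observability estimate follows by the usual route: restricting the time integral to a compact subinterval on which $\xi e^{-2s\alpha}$ is bounded below, passing to $\norm{\varphi(\cdot,0)}{L^2(\Omega)}$ via the energy of the backward problem, and keeping the localized observation term on the right. Duality then produces the control $v$, and tracking the constants through the argument delivers the explicit cost. I expect the absorption of the nonlocal source --- in particular verifying that \eqref{K_est_gen} supplies enough endpoint decay to beat the growth of the inverse weight for a single fixed choice of $s,\lambda$ --- to be the only genuinely delicate point, everything else being standard heat-equation Carleman machinery.
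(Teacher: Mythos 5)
Your overall architecture---duality reduction to observability for the adjoint system \eqref{e11s1}, the Fursikov--Imanuvilov Carleman estimate applied with the nonlocal term as a source, and absorption of that source via \eqref{K_est_gen}---is exactly the paper's. However, the absorption step, which you yourself flag as the only delicate point, fails as you describe it. Having discarded the Carleman weight through $e^{-2s\alpha}\le 1$, you retain only a single factor $e^{-\sigma^-/(t(T-t))}$ coming from \eqref{K_est_gen}, and you then claim that this factor dominates the reinstated inverse weight $e^{2s\alpha(\theta,t)}$ uniformly in $\theta$ once $s$ and $\lambda$ are frozen. Since $\alpha(\theta,t)\ge \sigma^-/(t(T-t))$ for every $\theta$, the product is bounded below by $e^{(2s-1)\sigma^-/(t(T-t))}$, which blows up as $t\to 0^+$ and $t\to T^-$ for every $s>1/2$; uniform boundedness would require $2s\sigma^+\le\sigma^-$, which is impossible because $\sigma^+>\sigma^-$. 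And $s$ cannot be taken small: the Carleman estimate \eqref{carleman z} itself requires $s\ge\varrho_1(T+T^2)$, and the eventual absorption forces $s\ge C\mathcal{K}^{\frac 23}T^2$. So no admissible choice of parameters validates your step, let alone with the $\epsilon$-smallness you claim: the hypothesis supplies \emph{one} power of the endpoint decay, while your route demands $2s$ powers.

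The paper's proof of Proposition \ref{carleman_phi_prop} shows how to repair precisely this point. First, the decay from \eqref{K_est_gen} is split at the Cauchy--Schwarz stage, writing $K\varphi=\big(e^{\sigma^-/(t(T-t))}K\big)\big(e^{-\sigma^-/(t(T-t))}\varphi\big)$, so that $\varphi$ keeps a factor $e^{-2\sigma^-/(t(T-t))}$. Second---and this is what you gave away with $e^{-2s\alpha}\le 1$---the Carleman weight is \emph{retained} and integrated in $x$ after Fubini, using $\int_\Omega e^{-2s\alpha(x,t)}\,dx\le|\Omega|\,e^{-2s\sigma^-/(t(T-t))}$: the $s$-dependent decay is extracted from the weight itself, not from the kernel. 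The combined factor $e^{-2(1+s)\sigma^-/(t(T-t))}$ is then converted back into a weight in the $\theta$ variable through the elementary comparison $(1+s)\sigma^->s\sigma^+$ (Proposition \ref{alpha_min_max_prop}, resting on $\sigma^-/\sigma^+=F(\lambda)>1/2$), which yields $e^{-2(1+s)\sigma^-/(t(T-t))}\le e^{-2s\sigma^+/(t(T-t))}\le e^{-2s\alpha(\theta,t)}$. This leaves the term $\mathcal{K}^2\intd_Q e^{-2s\alpha}|\varphi|^2\,dx\,dt$ with no smallness factor, and---contrary to your assertion that the nonlocal term cannot be killed by enlarging $s$---it is absorbed into $s^3\lambda^4\intd_Q e^{-2s\alpha}\xi^3|\varphi|^2\,dx\,dt$ exactly by taking $s\ge C\mathcal{K}^{\frac 23}T^2$, via $\xi^{-3}\le CT^6$ from \eqref{xi_est}; this choice is also the origin of the $\mathcal{K}^{\frac 23}$ in the observability constant \eqref{e12s1} and in the cost estimate \eqref{cost_est}. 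The remainder of your outline (restriction to a compact time slice, monotonicity of $t\mapsto\norm{\varphi(t)}{L^2(\Omega)}$, and duality to produce $v$) does agree with the paper.
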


Moreover, it is well known that this null controllability property is equivalent to the observability of the following adjoint system
\begin{align}\label{e11s1}
	\begin{cases}
		\D-\varphi_t-\Delta\varphi + \int_\Omega K(\theta,x,t)\varphi(\theta,t)\,d\theta = 0, & (x,t)\in Q
		\\
		\varphi = 0, & (x,t)\in \Sigma
		\\
		\varphi(x,T) = \varphi_T(x), & x\in \Omega.
	\end{cases}
\end{align}
Therefore, in order to prove Theorem \ref{control_thm}, we are going to show that the following result holds.

\begin{theorem}\label{th2s1} {\bf(Observability estimate).}
	For any solution of \eqref{e11s1} and for any kernel $K$ satisfying \eqref{K_est_gen} there exist positive constants $C_1$ and $C_2$, depending only on $\Omega$ and $\mathcal O$, such that 
	\begin{align}\label{e12s1}
		\norm{\varphi(x,0)}{L^2(\Omega)}^2 \leq \frac{C_1}{T} \exp\left[C_2\left(1 + \mathcal{K}^{\frac 23} +\frac
	1T \right)\right] \intd_{{ \mathcal O}\times(0,T)}|\varphi|^{2}\,dx\,dt\,.
	\end{align}
\end{theorem}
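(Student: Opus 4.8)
The plan is to derive the observability estimate \eqref{e12s1} from a global Carleman inequality for the heat operator, treating the nonlocal integral term as a lower-order perturbation that is absorbed using the smallness hypothesis \eqref{K_est_gen}. First I would fix the standard Carleman setup: choose a weight function $\alpha(x,t)=\eta(x)/(t(T-t))$ built from an auxiliary function $\eta$ adapted to $\Omega$ and $\mathcal{O}$ (this is where the constant $\sigma^-$ entering \eqref{K_est_gen} comes from, as $\sigma^-$ controls the minimum of the exponential weight), together with the companion weight $\xi(x,t)=e^{\lambda\eta(x)}/(t(T-t))$. Applying the classical parabolic Carleman estimate to the equation $-\varphi_t-\Delta\varphi = -\int_\Omega K(\theta,x,t)\varphi(\theta,t)\,d\theta$, I obtain, for all $\lambda$ and $s$ large enough,
\begin{align*}
	s^3\lambda^4\intd_Q e^{-2s\alpha}\xi^3|\varphi|^2\,dx\,dt \leq C\left(s^3\lambda^4\intd_{\mathcal{O}\times(0,T)}e^{-2s\alpha}\xi^3|\varphi|^2\,dx\,dt + \intd_Q e^{-2s\alpha}\Big|\int_\Omega K(\theta,x,t)\varphi(\theta,t)\,d\theta\Big|^2\,dx\,dt\right).
\end{align*}

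The key step is to absorb the nonlocal term on the right-hand side into the left-hand side global integral. Using the Cauchy--Schwarz inequality in $\theta$ together with the $L^\infty$ bound on $K$, I would estimate the square of the integral term pointwise in $(x,t)$ by $\big(\int_\Omega|K(\theta,x,t)|\,d\theta\big)\big(\int_\Omega|K(\theta,x,t)||\varphi(\theta,t)|^2\,d\theta\big)$; after integrating in $x$ and applying Fubini, the spatial weight $e^{-2s\alpha}$ (whose supremum over $x$ is controlled by $\exp(-2s\sigma^-/(t(T-t)))$) combines with the hypothesis \eqref{K_est_gen} to produce a factor of $\mathcal{K}^2$ times the unweighted time-integral of $\|\varphi(\cdot,t)\|_{L^2(\Omega)}^2$. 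The subtlety here is matching weights: the Carleman weight on the left is $e^{-2s\alpha}\xi^3$, so I need to reintroduce the weight after the Fubini exchange, which is precisely what the exponential-decay hypothesis on $K$ enables, since $\sigma^-$ is chosen so that the product $\exp(\sigma^-/(t(T-t)))\,e^{-s\alpha}$ stays controlled. By taking $s$ of the form $s=s_0(1+\mathcal{K}^{2/3}+T+T^2)$ — the $\mathcal{K}^{2/3}$ power arising from balancing the cubic weight power $\xi^3$ against the quadratic factor $\mathcal{K}^2$ — the perturbation term is absorbed into the left-hand side for $\mathcal{K}$ large.

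Once the nonlocal term is absorbed, I am left with a clean Carleman estimate bounding the weighted global integral of $|\varphi|^2$ by the weighted local integral over $\mathcal{O}\times(0,T)$. The remaining work is the standard passage from the Carleman inequality to the observability estimate \eqref{e12s1}. I would restrict the left-hand integral to a central time subinterval, say $[T/4,3T/4]$, where the weight $e^{-2s\alpha}\xi^3$ is bounded below by a positive constant depending on $T$ and $s$, and bound the right-hand local integral from above by replacing the weight by its supremum, giving $\|\varphi\|_{L^2(\Omega\times[T/4,3T/4])}^2$ controlled by the observation. Then a standard energy dissipation estimate for the adjoint system \eqref{e11s1} — using Gronwall together with the bound on $K$ to control the backward-in-time growth — lets me propagate this to the estimate $\|\varphi(\cdot,0)\|_{L^2(\Omega)}^2 \leq C\|\varphi(\cdot,t)\|_{L^2(\Omega)}^2$ and integrate in time over the central interval. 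Tracking the explicit dependence of the constants on $s$, $\lambda$, $T$, and $\mathcal{K}$ through these two reductions yields the claimed exponential cost $\frac{C_1}{T}\exp[C_2(1+\mathcal{K}^{2/3}+\frac1T)]$.

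I expect the main obstacle to be the explicit bookkeeping of constants required to obtain the sharp dependence $\mathcal{K}^{2/3}$ and $1/T$ in the exponent, rather than the structure of the argument itself. In particular, the absorption step must be carried out keeping the parameter $s$ explicit so that its optimal choice produces the $\mathcal{K}^{2/3}$ scaling, and the final energy estimate for the adjoint system must be performed carefully so that the constant from Gronwall does not introduce a worse-than-exponential dependence on $\mathcal{K}$; the nonlocal term in \eqref{e11s1} contributes, via \eqref{K_est_gen}, a manageable additive term in the energy inequality, but verifying this rigorously is the delicate part.
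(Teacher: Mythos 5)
Your proposal follows essentially the same route as the paper's proof: apply the standard global Carleman estimate (Proposition \ref{carleman z_prop}) to the adjoint system with the nonlocal term treated as the source, use Cauchy--Schwarz and Fubini together with \eqref{K_est_gen} to restore the Carleman weight in the $\theta$ variable, absorb the resulting global term by taking $s$ large, and pass to observability via the central interval $[T/4,3T/4]$ and an energy estimate. One small point in your favor: your weighted Cauchy--Schwarz split $\left|\int_\Omega K\varphi\,d\theta\right|^2\le\left(\int_\Omega|K|\,d\theta\right)\left(\int_\Omega|K||\varphi|^2\,d\theta\right)$ uses only the $L^1_\theta$ quantity appearing in \eqref{K_est_gen}, whereas the paper's split produces the factor $\int_\Omega e^{2\sigma^-/(t(T-t))}|K|^2\,d\theta$, which it bounds by $\mathcal K^2$ somewhat tacitly; your version is better matched to the stated hypothesis.

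Two concrete points, however, need repair. First, the parameter scaling: absorbing $\mathcal K^2\intd_Q e^{-2s\alpha}|\varphi|^2\,dx\,dt$ into $s^3\lambda^4\intd_Q e^{-2s\alpha}\xi^3|\varphi|^2\,dx\,dt$ requires $s^3\xi^3\gtrsim\mathcal K^2$ pointwise, and since $\xi^{-1}\le CT^2$ (estimate \eqref{xi_est}) this forces $s\ge C\mathcal K^{2/3}T^2$, which is exactly the paper's condition $s\ge\varrho_2(T+T^2+\mathcal K^{2/3}T^2)$ in Proposition \ref{carleman_phi_prop}. Your choice $s=s_0(1+\mathcal K^{2/3}+T+T^2)$ omits the $T^2$ factor on $\mathcal K^{2/3}$: with it, absorption fails when $T$ and $\mathcal K$ are both large, and even where it succeeds the final factor $e^{Cs/T^2}$ would produce an exponent containing $\mathcal K^{2/3}/T^2$ rather than the claimed $\mathcal K^{2/3}$; the corrected $s$ yields precisely the constant in \eqref{e12s1}. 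Second, your phrase ``$\exp(\sigma^-/(t(T-t)))\,e^{-s\alpha}$ stays controlled'' is the one step that cannot be waved through. After Fubini the leftover weight is $e^{-2(1+s)\sigma^-/(t(T-t))}$, and to return to the Carleman weight in the $\theta$ variable you must dominate $e^{-2s\alpha(\theta,t)}$, whose worst case is $e^{-2s\sigma^+/(t(T-t))}$; this amounts to the ratio inequality $(1+s)\sigma^-\ge s\sigma^+$, to which the paper devotes a separate lemma (Proposition \ref{alpha_min_max_prop}), resting on $\sigma^-/\sigma^+=F(\lambda)>1/2$. That bound depends on the specific exponential structure $\sigma=e^{4\lambda\norm{\eta^0}{\infty}}-e^{\lambda\left(2\norm{\eta^0}{\infty}+\eta^0(x)\right)}$; with your simplified weight $\alpha=\eta(x)/(t(T-t))$ the analogous ratio is $\max\eta/\min\eta$, and since the Fursikov--Imanuvilov function vanishes on $\partial\Omega$ the minimum degenerates and the comparison collapses. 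So the shifted-exponential form of the weight is essential here, not cosmetic. Finally, at the last step the paper avoids your Gronwall concern entirely by asserting that $t\mapsto\norm{\varphi(t)}{L^2(\Omega)}$ is nondecreasing (backward dissipativity), so no $\mathcal K$-dependent constant enters; your worry is legitimate --- a naive Gronwall bound contributes a factor of order $e^{C\mathcal K T e^{-4\sigma^-/T^2}}$, which is not of the form $e^{C(1+\mathcal K^{2/3}+1/T)}$ --- so establishing monotonicity (or a $\mathcal K$-free substitute, using the decay of $K$ near $t=0$ and $t=T$) rather than raw Gronwall is what preserves the stated dependence.
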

Once \eqref{e12s1} is known, Theorem \ref{control_thm} can be proved employing a classical arguments. 

The proof of the above inequality, in turn, relies on a global Carleman estimate, in which we pay special attention to the presence of the nonlocal term. Moreover, we mention that Carleman inequalities for equations similar to \eqref{e1s1} have been obtained in the context of the study of ill-posed problems (see, e.g., \cite{lorenzi2011two}). 

Since equation \eqref{e1s1} turns out to be null-controllable in any time $T>0$, a natural issue to be analyzed is the {\it cost of null controllability} or, more precisely, the cost of driving the solution to \eqref{e1s1} from $y_0$ to zero. With this purpose, let us recall that this cost is measured by the following quantity:
\begin{align}\label{cost}
	\mathcal C(y_0)=\D\inf_{v\in{\mathcal O\times(0,T)}}\norm{v}{L^2(\mathcal O\times(0,T))}\,.
\end{align}

It is classically known that the cost of null controllability for a heat-like equation blows-up as $T\to 0^+$. We will see later in this work that this is the case also for our nonlocal problem.

Finally, observe that hypothesis \eqref{K_est_gen} implies that the kernel $K$ has to vanish exponentially as $t$ goes to $0^+$ and to $T^-$. Nevertheless, following the classical approach of \cite{fernandez2004local} (see also \cite{tao2016null}), it is possible to remove the decay assumption at $t=0$, but this at the price of losing any information on the controllability cost. In fact, in this case we shall argue by a fixed point procedure, implying that we do not have a constructive method to build the control. We will discuss this fact with more details later in this work.  

The rest of the paper is organized as follows. In Section \ref{obs_sec}, we prove in detail the observability estimate in Theorem \ref{th2s1}, from which the proof of Theorem \ref{control_thm} follows immediately. Moreover, we will present an abridged discussion concerning the null controllability cost associated to our problem. In Section \ref{decay_sec}, we show that it is possible to remove the decay assumption for the kernel at $t=0$, but with the drawback of losing an explicit constant in the observability inequality and, consequently, the possibility of analyzing the cost of null controllability. In Section \ref{nl_sec}, we will briefly consider the extension of our result to the semilinear case. Finally, Section \ref{comments_sec} will be devoted to some additional comments on the necessity of the hypothesis \eqref{K_est_gen}.

\section{Proof of the observability inequality and of the controllability result}\label{obs_sec}
The observability inequality for the solutions to the adjoint system \eqref{e11s1} is a consequence of a suitable global Carleman estimate. In the sequel, $C$ stands for a generic positive constant only depending on $\Omega$ and ${\mathcal O}$, whose value can change from line to line.

According to \cite[Lemma 1.1]{fursikov1996controllability}, we have the following. 

\begin{lemma}
Let $\mathcal O\subset\subset \Omega$ be a nonempty open set. Then, there exists $\eta^0\in C^2(\overline{\Omega})$ such that $\eta^0>0$ in $\Omega$, $\eta^0=0$ on $\partial\Omega$ and $|\nabla\eta^0|> 0$ in $\overline{\Omega\setminus{ \mathcal O}}$.
\end{lemma}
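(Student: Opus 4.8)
The plan is to reduce the statement to a purely differential-geometric construction. A function that is positive in $\Omega$ and vanishes on $\partial\Omega$ necessarily attains an interior maximum, hence possesses at least one interior critical point; one therefore cannot make $|\nabla\eta^0|>0$ on all of $\Omega$, and the real content of the lemma is to \emph{confine} all critical points to the prescribed set $\mathcal O$. Accordingly, I would first produce a convenient reference function with finitely many, well-understood critical points, and then transport those critical points into $\mathcal O$ by a diffeomorphism that leaves a neighborhood of $\partial\Omega$ untouched.

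\textbf{A Morse reference function.} I would construct $f\in C^\infty(\overline\Omega)$ with $f>0$ in $\Omega$, $f=0$ on $\partial\Omega$ and $\partial_\nu f<0$ on $\partial\Omega$. A concrete choice is the solution of $-\Delta f_0=1$ in $\Omega$, $f_0=0$ on $\partial\Omega$: since $\partial\Omega\in C^2$, the Hopf lemma gives $\partial_\nu f_0<0$, so $\nabla f_0\neq 0$ near $\partial\Omega$ and $\partial\Omega$ is a regular level set. Perturbing $f_0$ by an arbitrarily small function supported in a compact subset of $\Omega$ and invoking the genericity of Morse functions (Sard's theorem), I may additionally assume that all critical points of $f$ are nondegenerate while the sign and boundary conditions above are preserved. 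Nondegenerate critical points are isolated, so by compactness of $\overline\Omega$ there are finitely many of them, say $x_1,\dots,x_m\in\Omega$.

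\textbf{Transporting the critical points.} Fix a point $p\in\mathcal O$. Since $\Omega$ is open and connected, each $x_j$ can be joined to $p$ by a smooth path lying in $\Omega$; after a small transversal perturbation these paths may be taken pairwise disjoint and contained in a fixed compact set $\Omega_0\subset\subset\Omega$. Thickening them and using a partition of unity, I build a smooth vector field compactly supported in $\Omega_0$ whose time-one flow $\Phi$ is a $C^\infty$ diffeomorphism of $\overline\Omega$ equal to the identity near $\partial\Omega$ and with $\Phi(x_j)\in\mathcal O$ for all $j$. Setting $\eta^0:=f\circ\Phi^{-1}$, the properties $\eta^0\in C^2(\overline\Omega)$, $\eta^0=0$ on $\partial\Omega$ and $\eta^0>0$ in $\Omega$ are immediate, and the chain-rule identity $\nabla\eta^0(y)=\big(D\Phi^{-1}(y)\big)^{\!\top}\nabla f\big(\Phi^{-1}(y)\big)$, together with the invertibility of $D\Phi^{-1}$, shows that $y$ is a critical point of $\eta^0$ if and only if $\Phi^{-1}(y)$ is one of $f$; hence the critical set of $\eta^0$ is exactly $\{\Phi(x_1),\dots,\Phi(x_m)\}\subset\mathcal O$. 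As $\mathcal O$ is open we have $\overline{\Omega\setminus\mathcal O}\cap\mathcal O=\emptyset$, and therefore $|\nabla\eta^0|>0$ on $\overline{\Omega\setminus\mathcal O}$, which is the desired property.

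The main obstacle I anticipate is the transport step: producing a single diffeomorphism that simultaneously carries all the critical points into $\mathcal O$ while being supported away from $\partial\Omega$, so that the boundary condition and the nonvanishing normal derivative inherited from $f$ are not disturbed. This is where the connectedness of $\Omega$ is essential, and some care is needed to keep the flow inside $\Omega$ and of class $C^2$. All remaining verifications are routine once $\Phi$ is in hand, precisely because diffeomorphisms map critical points bijectively and create no new ones.
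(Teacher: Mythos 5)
The paper does not prove this lemma at all: it is quoted verbatim from Fursikov--Imanuvilov (\cite{fursikov1996controllability}, Lemma 1.1), and your argument is essentially the classical proof given there --- build a reference function that is positive inside, vanishes on $\partial\Omega$ with nonvanishing normal derivative, perturb it to a Morse function so that its critical points are finitely many interior points, and push those points into $\mathcal O$ by a diffeomorphism that is the identity near $\partial\Omega$. So in approach you are aligned with the source the paper relies on, and the overall structure of your proof is sound.

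Two small repairs are needed. First, your concrete reference function $f_0$ solving $-\Delta f_0=1$, $f_0=0$ on $\partial\Omega$ need not belong to $C^2(\overline\Omega)$ when $\partial\Omega$ is only of class $C^2$: Schauder theory requires a $C^{2,\alpha}$ boundary, and with $C^2$ data one only gets $f_0\in W^{2,p}\cap C^{1,\beta}(\overline\Omega)$, which suffices for the Hopf condition $\partial_\nu f_0<0$ but not for the stated $C^2(\overline\Omega)$ regularity of $\eta^0$. This is easily fixed by replacing $f_0$ with, e.g., a regularized distance function (or any $C^\infty(\overline\Omega)$ function positive in $\Omega$, vanishing on $\partial\Omega$, with nonvanishing gradient in a neighborhood of $\partial\Omega$), after which your interior-supported Morse perturbation goes through unchanged. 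Second, in the transport step you join every critical point $x_j$ to the \emph{same} point $p\in\mathcal O$ and then ask the paths to be pairwise disjoint --- impossible as stated, since they share the endpoint $p$; choose instead $m$ distinct targets $p_1,\dots,p_m\in\mathcal O$ (possible because $\mathcal O$ is open), and the disjoint-tube vector-field construction and the time-one flow $\Phi$ then work exactly as you describe. With these two adjustments the chain-rule argument, the identification of the critical set of $\eta^0$ with $\{\Phi(x_1),\dots,\Phi(x_m)\}\subset\mathcal O$, and the conclusion $|\nabla\eta^0|>0$ on $\overline{\Omega\setminus\mathcal O}$ are all correct.
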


Now, for a parameter $\lambda>0$, we define 
\begin{align*}
	\sigma(x):= e^{4\lambda\norm{\eta^0}{\infty}}-e^{\lambda\left(2\norm{\eta^0}{\infty}+\eta^0(x)\right)},
\end{align*}
and we introduce the weight functions
\begin{align}\label{weight}
	\alpha(x,t):=\frac{\sigma(x)}{t(T-t)}, \;\;\;\;\;\;\;\;\; \xi(x,t):=\frac{e^{\lambda\left(2\norm{\eta^0}{\infty}+\eta^0(x)\right)}}{t(T-t)}.
\end{align}
Moreover, in what follows we will use the notation 
\begin{align}\label{notation_alpha}
	\sigma^+:= \max_{x\in\overline{\Omega}}\sigma(x)=e^{4\lambda\norm{\eta^0}{\infty}}-e^{2\lambda\norm{\eta^0}{\infty}}, \;\;\;\;\;\;\;\;\; \sigma^-:= \min_{x\in\overline{\Omega}}\sigma(x)=e^{4\lambda\norm{\eta^0}{\infty}}-e^{3\lambda\norm{\eta^0}{\infty}}, 
\end{align}
and we introduce the following quantity to abridge the computations
\begin{align*}
	\mathcal I(\cdot):= s\lambda^2\intd_Q e^{-2s\alpha}\xi|\nabla \cdot|^2\,dx\,dt + s^3\lambda^4\intd_Q e^{-2s\alpha}\xi^3|\cdot|^2\,dx\,dt.
\end{align*}	
Finally, for our further results we are going to use the estimate
\begin{align}\label{xi_est}
	\xi(t)^{-\nu}\leq CT^{2\nu},\;\;\;\forall\nu>0.
\end{align}

Then, \cite[Lemma 1.3]{fernandez2006global} gives the following.

\begin{proposition}\label{carleman z_prop} 
There exist positive constants $C$ and $s_1$ such that, for all $s\geq s_1$, $\lambda\geq C$, $F\in L^2(Q)$ and $z_T\in L^2(\Omega)$, the solution $z$ to 
\begin{align}\label{syst_z}
	\begin{cases}
		z_t+\Delta z = F, & (x,t)\in Q
		\\
		z=0, & (x,t)\in\Sigma
		\\
		z(x,T) = z_T(x), & x\in \Omega
	\end{cases}
\end{align}
satisfies
\begin{align}\label{carleman z}
	\mathcal{I}(z) \leq C \left[s^3\lambda^4\intd_{\mathcal O\times(0,T)} e^{-2s\alpha}\xi^3 |z|^2\,dx\,dt + \intd_Q e^{-2s\alpha}|F|^2\,dx\,dt\right]
\end{align}
Moreover, $s_1$ is of the form 
\begin{align}\label{e6s2}
	s_1 = \varrho_1\left(T+T^2\right).
\end{align}
where $\varrho_1$ is a positive constant that only depends on $\Omega$ and $\mathcal O$.
\end{proposition}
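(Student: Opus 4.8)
The plan is to establish \eqref{carleman z} by the classical Fursikov--Imanuvilov method, conjugating the parabolic operator with the exponential weight and splitting it into its formally self-adjoint and skew-adjoint parts. Concretely, I would set $w := e^{-s\alpha}z$, which vanishes on $\Sigma$ and, because $\alpha$ blows up at the endpoints, also at $t=0$ and $t=T$. Rewriting $z_t+\Delta z=F$ in terms of $w$ amounts to expanding $e^{-s\alpha}(\partial_t+\Delta)(e^{s\alpha}w)$ and grouping the terms by parity in $s$, which yields a decomposition $M_1 w + M_2 w = G_s$, where $M_1 w$ collects the (formally) self-adjoint contributions, $M_2 w$ the skew-adjoint ones, and $G_s:=e^{-s\alpha}F$ minus a collection of lower-order remainders arising from $\alpha_t$, $\Delta\sigma$ and $|\nabla\sigma|^2$. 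Taking the $L^2(Q)$ norm of this identity gives
\begin{align*}
	\norm{M_1 w}{L^2(Q)}^2 + \norm{M_2 w}{L^2(Q)}^2 + 2\langle M_1 w, M_2 w\rangle_{L^2(Q)} = \norm{G_s}{L^2(Q)}^2,
\end{align*}
and the heart of the argument is the computation of the cross term.

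Next I would expand $2\langle M_1 w, M_2 w\rangle_{L^2(Q)}$ by integrating by parts in space and time. The key structural point is that, by \eqref{weight}, one computes $\nabla\alpha=-\lambda\xi\nabla\eta^0$, so each integration by parts falling on $\nabla\eta^0$ produces an extra factor of $\lambda$; the dominant contributions are therefore precisely $s\lambda^2\intd_Q \xi|\nabla w|^2$ and $s^3\lambda^4\intd_Q \xi^3|\nabla\eta^0|^2|w|^2$. Every other term generated in this step carries a lower power of $s$ or $\lambda$, or extra negative powers of $\xi$, and is slated for absorption. The boundary contribution on $\Sigma$ reduces to a multiple of $s\lambda\intd_\Sigma \xi(\partial_\nu\eta^0)|\partial_\nu w|^2$, which has a favourable sign because $\eta^0>0$ in $\Omega$ and $\eta^0=0$ on $\partial\Omega$ force $\partial_\nu\eta^0\leq 0$; hence this term may simply be discarded.

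I would then absorb the remainders in $G_s$ together with the lower-order pieces of the cross term into the two dominant integrals by taking $\lambda$ large and $s\geq s_1$. The explicit form $s_1=\varrho_1(T+T^2)$ in \eqref{e6s2} emerges from the standard bookkeeping on the time weights: the derivatives $\alpha_t$ and $\alpha_{tt}$ scale, respectively, like $T\xi^2$ and $T^2\xi^3$, so the time-derivative remainders are dominated by $s^3\lambda^4\intd_Q\xi^3|w|^2$ exactly when $s$ exceeds a fixed multiple of $T+T^2$. After this absorption, the estimate controls $s\lambda^2\intd_Q\xi|\nabla w|^2 + s^3\lambda^4\intd_Q\xi^3|w|^2$ by $\intd_Q e^{-2s\alpha}|F|^2$ plus two residual local terms supported where $|\nabla\eta^0|$ is allowed to vanish, namely $s\lambda^2\intd_{\mathcal O\times(0,T)}\xi|\nabla w|^2$ and $s^3\lambda^4\intd_{\mathcal O\times(0,T)}\xi^3|w|^2$.

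The main obstacle, and the final step, is to remove the local \emph{gradient} term $s\lambda^2\intd_{\mathcal O\times(0,T)}\xi|\nabla w|^2$, since \eqref{carleman z} only permits a local $L^2$ observation. For this I would introduce a cutoff $\zeta\in C^\infty_c(\mathcal O)$ equal to one on the (slightly smaller) region where $|\nabla\eta^0|$ may vanish, multiply the equation for $w$ by $s\lambda^2\zeta\xi w$ and integrate by parts; this trades the local gradient term for $s^3\lambda^4\intd_{\mathcal O\times(0,T)}\xi^3|w|^2$ (reabsorbed into the global term) plus a local $L^2$ term supported in $\mathcal O$, which is exactly the right-hand side of \eqref{carleman z}. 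Finally, undoing the substitution $z=e^{s\alpha}w$ and using $e^{-2s\alpha}|\nabla z|^2\leq C\bigl(|\nabla w|^2 + s^2\lambda^2\xi^2|w|^2\bigr)$ converts the $w$-estimate into the stated bound for $\mathcal I(z)$. I expect this localization-and-gradient-removal step, together with the careful tracking needed to extract the clean $T$-dependence of $s_1$, to be the most delicate part.
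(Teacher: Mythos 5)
Your proposal is correct and is essentially the paper's own route: the paper does not prove Proposition \ref{carleman z_prop} directly but quotes it from \cite[Lemma 1.3]{fernandez2006global}, and the proof of that lemma is precisely the Fursikov--Imanuvilov scheme you outline --- conjugation $w=e^{-s\alpha}z$ (vanishing at $t=0,T$ since $\alpha$ blows up there), the self-adjoint/skew-adjoint splitting with the cross-term computation exploiting $\nabla\alpha=-\lambda\xi\nabla\eta^0$, discarding the boundary term thanks to $\partial_\nu\eta^0\leq 0$, absorption of remainders using $|\alpha_t|\leq CT\xi^2$ and $|\alpha_{tt}|\leq CT^2\xi^3$ to extract $s_1=\varrho_1\left(T+T^2\right)$, and the cutoff argument trading the local gradient term for a local $L^2$ term. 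Your sketch hits all the essential steps, so nothing further is needed.
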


Furthermore, in what follows we will need the following technical result, whose proof is inspired by \cite[Lemma 6.1]{montoya2017robust}. 

\begin{proposition}\label{alpha_min_max_prop}
For any fixed $\lambda>0$ and $s>1$ it holds
\begin{align}\label{alpha_min_max}
	\exp\left(-\frac{(1+s)\sigma^-}{t(T-t)}\right)< \exp\left(-\frac{s\sigma^+}{t(T-t)}\right),
\end{align}
where $\sigma^-$ and $\sigma^+$ have bee introduced in \eqref{alpha_min_max}.
\end{proposition}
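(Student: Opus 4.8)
The plan is to reduce the claimed inequality between the two exponentials to a purely algebraic comparison between the constants $\sigma^-$ and $\sigma^+$, which no longer involves the time variable. Fix $t\in(0,T)$, so that $t(T-t)>0$. Since $r\mapsto e^{r}$ is strictly increasing and both exponents carry the common positive factor $1/(t(T-t))$, the inequality \eqref{alpha_min_max} is equivalent, after multiplying through by $t(T-t)>0$, to
\begin{align*}
	-(1+s)\sigma^- < -s\sigma^+, \qquad\text{i.e.}\qquad (1+s)\sigma^->s\sigma^+.
\end{align*}
Because this reduced inequality is independent of $t$, it suffices to compare the scalar quantities $\sigma^\pm$; the passage from the exponential form to this scalar form is the routine part of the argument.

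Next I would insert the explicit expressions \eqref{notation_alpha}. Writing $a:=\lambda\norm{\eta^0}{\infty}>0$, we have $\sigma^-=e^{4a}-e^{3a}=e^{3a}(e^{a}-1)$ and $\sigma^+=e^{4a}-e^{2a}$, so that $\sigma^+-\sigma^-=e^{3a}-e^{2a}=e^{2a}(e^{a}-1)>0$. Rewriting $(1+s)\sigma^->s\sigma^+$ in the form $\sigma^->s\,(\sigma^+-\sigma^-)$ and dividing through by the strictly positive factor $e^{2a}(e^{a}-1)$, the whole comparison collapses to the single scalar condition
\begin{align*}
	e^{\lambda\norm{\eta^0}{\infty}}>s.
\end{align*}

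The main obstacle is precisely this last reduction, and it is a genuine one. The computation shows that \eqref{alpha_min_max} is \emph{not} an unconditional consequence of $s>1$ and $\lambda>0$: it holds if and only if $s<e^{\lambda\norm{\eta^0}{\infty}}$, and it fails once $s$ exceeds this threshold (since then $(1+s)\sigma^-<s\sigma^+$ and the two exponentials swap order). Consequently, to obtain the statement as worded one cannot treat $s$ and $\lambda$ as independent free parameters; one must either restrict to the regime $s<e^{\lambda\norm{\eta^0}{\infty}}$ or fix $\lambda$ large enough that $\lambda>\norm{\eta^0}{\infty}^{-1}\ln s$ on the relevant range of $s$. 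I would therefore expect the crux of a correct proof to lie in how the Carleman parameters are coordinated: since in the observability argument $\lambda\geq C$ is chosen (large) before $s$ is selected, the claim is available only as long as $s$ is kept below $e^{\lambda\norm{\eta^0}{\infty}}$, and checking the compatibility of this bound with the admissible range $s\geq s_1=\varrho_1(T+T^2)$ from Proposition \ref{carleman z_prop} is the delicate point on which I would concentrate.
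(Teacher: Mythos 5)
Your algebra is right, and what it uncovers is not a defect in your argument but in the paper's. The paper takes a different route: it writes $\sigma^-=F(\lambda)\sigma^+$ with
\begin{align*}
	F(\lambda)=\frac{e^{2\lambda\norm{\eta^0}{\infty}}-e^{\lambda\norm{\eta^0}{\infty}}}{e^{2\lambda\norm{\eta^0}{\infty}}-1}=\frac{e^{\lambda\norm{\eta^0}{\infty}}}{e^{\lambda\norm{\eta^0}{\infty}}+1}\in\left(\tfrac 12,1\right),
\end{align*}
uses $s>1$ to get $(1+s)F(\lambda)>2F(\lambda)>1$, deduces $\exp\bigl(\sigma^+[t(T-t)]^{-1}\bigl(1-(1+s)F(\lambda)\bigr)\bigr)<1$, and then asserts
\begin{align*}
	\exp\left(\frac{s\sigma^+}{t(T-t)}\right)<\exp\left(\frac{(1+s)F(\lambda)\sigma^+}{t(T-t)}\right).
\end{align*}
This last step is a non sequitur: the preceding display only yields the same inequality with $\sigma^+$, not $s\sigma^+$, on the left. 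The asserted inequality is equivalent to $(1+s)F(\lambda)>s$, i.e.\ $F(\lambda)>s/(1+s)$, and since $x\mapsto x/(1+x)$ is increasing, this is exactly your threshold $e^{\lambda\norm{\eta^0}{\infty}}>s$. Your reduction $(1+s)\sigma^->s\sigma^+\iff \sigma^->s(\sigma^+-\sigma^-)\iff e^{\lambda\norm{\eta^0}{\infty}}>s$ checks out (e.g.\ $e^{\lambda\norm{\eta^0}{\infty}}=2$, $s=3$ gives $(1+s)\sigma^-=32<36=s\sigma^+$), so Proposition \ref{alpha_min_max_prop} is false as stated for fixed $\lambda>0$ and arbitrary $s>1$; the hypothesis $s>1$ only supports the weaker conclusion $\exp(-(1+s)\sigma^-/(t(T-t)))<\exp(-\sigma^+/(t(T-t)))$.

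Your closing remark about parameter coordination is also the correct diagnosis of the downstream damage. In the proof of Proposition \ref{carleman_phi_prop}, $\lambda$ is frozen first and then $s\geq\varrho_2\left(T+T^2+\mathcal{K}^{\frac 23}T^2\right)$ must be imposed, so for $T$ or $\mathcal{K}$ large the parameter $s$ leaves the regime $s<e^{\lambda\norm{\eta^0}{\infty}}$ in which \eqref{alpha_min_max} actually holds. Nor can the weak (true) version substitute there: absorbing the nonlocal term needs a pointwise bound of the form $\mathcal{K}^2 e^{-2(1+s)\sigma^-/(t(T-t))}\lesssim s^3\lambda^4\xi^3 e^{-2s\alpha(\theta,t)}$, and with only $e^{-2\sigma^+/(t(T-t))}$ available on the left, the right-hand side decays exponentially faster as $t\to 0^+$ or $t\to T^-$ (since $s\alpha\geq s\sigma^-/(t(T-t))$ and $s\sigma^->\sigma^+$ once $s\geq 2$), so no polynomial prefactor compensates. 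Hence a correct argument must either keep $s$ below $e^{\lambda\norm{\eta^0}{\infty}}$ --- incompatible with the Carleman scheme as written --- or strengthen the decay hypothesis \eqref{K_est_gen} by replacing the fixed constant $\sigma^-$ with a quantity of size $s(\sigma^+-\sigma^-)$, i.e.\ a decay rate scaling with the final choice of $s$. In short, your proposal is correct, and the "delicate point" you flag is precisely where the paper's proof breaks.
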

\begin{proof}
First of all, from the definitions of $\sigma^-$ and $\sigma^+$ (see \eqref{notation_alpha}), we have $\sigma^-=F(\lambda)\sigma^+$, with
\begin{align*}
	F(\lambda):=\frac{e^{2\lambda\norm{\eta^0}{\infty}}-e^{\lambda\norm{\eta^0}{\infty}}}{e^{2\lambda\norm{\eta^0}{\infty}}-1}.
\end{align*}

It is straightforward to check that $F(\lambda)$ is a monotone increasing function, verifying $\lim_{\lambda\to+\infty} F(\lambda) = 1$ and $\lim_{\lambda\to 0^+} F(\lambda) = 1/2$ (see Figure \ref{F_lambda_fig}). 
\begin{figure}[h]
	\centering 
	\begin{tikzpicture}[scale=0.9]
	\pgfplotsset{settings/.style={legend cell align = {left},
			legend pos =  south east,
			legend style = {draw = none}}}
	\begin{axis}[settings,xmin=0,xmax=10, ytick={0.5,1}]
	
	\addplot[domain=0.01:10, samples=500,thick,color=blue]{(exp(2*x)-exp(x))/(exp(2*x)-1)};\addlegendentry{$\;\;F(\lambda)$};
	\addplot[domain=0.01:10, samples=500,dashed]{1};\addlegendentry{\;\;asymptote $F(\lambda)=1$};
		\end{axis}
	\end{tikzpicture}\caption{Profile of the function $F(\lambda)$ for $\lambda\geq 0$.}\label{F_lambda_fig}
\end{figure}
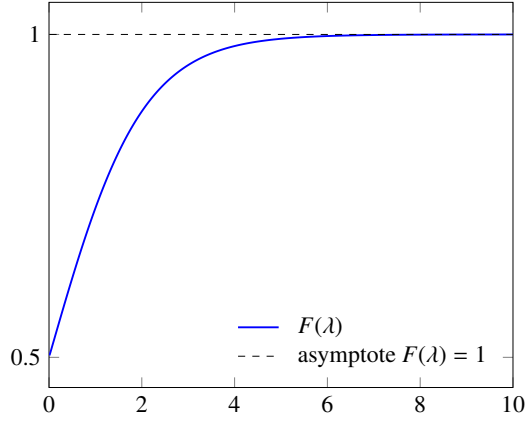
Moreover, since $s>1$ we have 
\begin{align}\label{F_lambda_est}
	(1+s)F(\lambda)> 2F(\lambda)> 1.
\end{align}
Then, multiplying both sides of \eqref{F_lambda_est} by $\sigma^+$, and since $[t(T-t)]^{-1}>0$, we immediately have 
\begin{align*}
	\sigma^+[t(T-t)]^{-1}\Big(1-(1+s)F(\lambda)\Big)< 0.
\end{align*}
Hence
\begin{align*}
	\exp\left(\frac{\sigma^+}{t(T-t)}\Big(1-(1+s)F(\lambda)\Big)\right)< 1
\end{align*}
and we can conclude
\begin{align*}
	\exp\left(\frac{s\sigma^+}{t(T-t)}\right) < \exp\left(\frac{(1+s)F(\lambda)\sigma^+}{t(T-t)}\right) = \exp\left(\frac{(1+s)\sigma^-}{t(T-t)}\right).
\end{align*}
From this, \eqref{alpha_min_max} follows immediately.
\end{proof}

Proposition \ref{carleman z_prop} can now be applied to the solutions to \eqref{e11s1}, and we obtain the following Carleman estimate.

\begin{proposition}\label{carleman_phi_prop} 
Let $\varphi^T\in L^2(\Omega)$ and assume that the kernel $K$ satisfies \eqref{K_est_gen}. Then, there exist positive constants $C$, $\lambda_0$ and $\varrho_2$, only depending on $\Omega$ and $\mathcal O$, such that the solution $\varphi$ to \eqref{e11s1} corresponding to the initial datum $\varphi^T$ satisfies  
\begin{align}\label{e5s2}
	\mathcal{I}(\varphi) \leq C s^3\lambda^4\intd_{\mathcal O\times(0,T)} e^{-2s\alpha}\xi^3 |\varphi|^2\,dx\,dt, 
\end{align}
for any $\lambda\geq \lambda_0$ and any $s\geq \varrho_2\left(T + T^2 + \mathcal{K}^{\frac 23}T^2\right)$.
\end{proposition}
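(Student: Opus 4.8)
The plan is to read the adjoint system \eqref{e11s1} as a backward heat equation whose right-hand side is the nonlocal term, apply the source Carleman estimate of Proposition \ref{carleman z_prop} verbatim, and then absorb the resulting source contribution into the left-hand side $\mathcal{I}(\varphi)$. The whole point will be the treatment of the nonlocal term, and the engine for that will be Proposition \ref{alpha_min_max_prop} together with the decay encoded in \eqref{K_est_gen}.

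First I would rewrite the equation. Moving the integral term to the right and changing sign, \eqref{e11s1} becomes
\[
	\varphi_t + \Delta\varphi = F, \qquad F(x,t) := \int_\Omega K(\theta,x,t)\varphi(\theta,t)\,d\theta,
\]
with terminal datum $\varphi(\cdot,T)=\varphi^T$. This is exactly system \eqref{syst_z} with $z=\varphi$, so Proposition \ref{carleman z_prop} yields, for $s\geq s_1=\varrho_1(T+T^2)$ and $\lambda\geq C$,
\[
	\mathcal{I}(\varphi) \leq C\left[s^3\lambda^4\intd_{\mathcal O\times(0,T)} e^{-2s\alpha}\xi^3|\varphi|^2\,dx\,dt + \intd_Q e^{-2s\alpha}|F|^2\,dx\,dt\right].
\]
Everything then reduces to estimating the source integral and showing it is swallowed by $\mathcal{I}(\varphi)$.

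Next I would estimate $\intd_Q e^{-2s\alpha}|F|^2$. Cauchy--Schwarz in the nonlocal variable gives
\[
	|F(x,t)|^2 \leq \left(\int_\Omega|K(\theta,x,t)|\,d\theta\right)\left(\int_\Omega |K(\theta,x,t)|\,|\varphi(\theta,t)|^2\,d\theta\right).
\]
I would bound the first factor by $\mathcal{K}\,e^{-\sigma^-/[t(T-t)]}$ via \eqref{K_est_gen}, use $e^{-2s\alpha(x,t)}\leq e^{-2s\sigma^-/[t(T-t)]}$ (since $\sigma(x)\geq\sigma^-$), then apply Fubini to integrate in $x$ first and bound $\int_\Omega|K(\theta,x,t)|\,dx$ once more by $\mathcal{K}\,e^{-\sigma^-/[t(T-t)]}$ through \eqref{K_est_gen}. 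Collecting the exponentials produces
\[
	\intd_Q e^{-2s\alpha}|F|^2\,dx\,dt \leq \mathcal{K}^2\intd_Q e^{-(2+2s)\sigma^-/[t(T-t)]}\,|\varphi|^2\,dx\,dt.
\]
Here Proposition \ref{alpha_min_max_prop} enters decisively: applied with parameter $2s>1$ it gives $e^{-(1+2s)\sigma^-/[t(T-t)]}<e^{-2s\sigma^+/[t(T-t)]}\leq e^{-2s\alpha}$, and since $(2+2s)\sigma^->(1+2s)\sigma^-$ the min-type weight manufactured by the kernel can be traded for the genuine Carleman weight $e^{-2s\alpha}$, leaving
\[
	\intd_Q e^{-2s\alpha}|F|^2\,dx\,dt \leq \mathcal{K}^2\intd_Q e^{-2s\alpha}|\varphi|^2\,dx\,dt.
\]
Finally, inserting $\xi^3$ and using \eqref{xi_est} with $\nu=3$, this is $\leq C\mathcal{K}^2T^6\intd_Q e^{-2s\alpha}\xi^3|\varphi|^2\leq C\mathcal{K}^2T^6 s^{-3}\lambda^{-4}\mathcal{I}(\varphi)$, which is at most $\tfrac12\mathcal{I}(\varphi)$ as soon as $s\geq c\,\mathcal{K}^{2/3}T^2$; combined with $s_1=\varrho_1(T+T^2)$ this gives the threshold $s\geq\varrho_2(T+T^2+\mathcal{K}^{2/3}T^2)$, and absorbing $\tfrac12\mathcal{I}(\varphi)$ on the left yields \eqref{e5s2}, with $\lambda_0$ the larger of the two $\lambda$-thresholds.

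The main obstacle is precisely the middle step: in $F$ the Carleman weight is evaluated at the observation point $x$, while the factor $|\varphi(\theta,t)|^2$ lives at the integration point $\theta$, so the weights $e^{-2s\alpha(x,t)}$ and $e^{-2s\alpha(\theta,t)}$ do not coincide and a naive bound loses control. Proposition \ref{alpha_min_max_prop} is the device that reconciles them, and the exponential decay built into \eqref{K_est_gen} is exactly what pays for the gap between $\sigma^-$ and $\sigma^+$; keeping track of the two kernel factors is what produces the power $\mathcal{K}^2$, hence the exponent $\mathcal{K}^{2/3}$ in the threshold for $s$ and, ultimately, in the observability cost \eqref{e12s1}.
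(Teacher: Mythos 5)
Your proposal is correct and follows essentially the same route as the paper: apply Proposition \ref{carleman z_prop} with the nonlocal term as the source $F$, estimate it via Cauchy--Schwarz and Fubini, convert the resulting weight $e^{-(2+2s)\sigma^-/[t(T-t)]}$ into the genuine Carleman weight $e^{-2s\alpha(\theta,t)}$ through Proposition \ref{alpha_min_max_prop}, and absorb the remainder using \eqref{xi_est} under the threshold $s\geq c\,\mathcal{K}^{2/3}T^2$, combined with $s_1=\varrho_1(T+T^2)$. The only difference is cosmetic (and, if anything, more faithful to the hypothesis): you apply Cauchy--Schwarz with weight $|K|$ and invoke the $L^1$-type bound \eqref{K_est_gen} twice (once in $\theta$, once in $x$ after Fubini), whereas the paper splits $K=e^{\sigma^-/[t(T-t)]}K\cdot e^{-\sigma^-/[t(T-t)]}$ and bounds the $L^2$-type factor $\int_\Omega e^{2\sigma^-/[t(T-t)]}|K|^2\,d\theta$ by $\mathcal{K}^2$, arriving at the identical exponent $2(1+s)\sigma^-$ and the same absorption.
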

\begin{proof}
We begin applying \eqref{carleman z} to $\varphi$, obtaining, for any $\lambda\geq C$ and any $s\geq \varrho_1\left(T + T^2\right)$ 
\begin{align}\label{carleman_prel}
	\mathcal{I}(\varphi) \leq C \left[s^3\lambda^4\intd_{\mathcal O\times(0,T)} e^{-2s\alpha}\xi^3 |\varphi|^2\,dx\,dt + \intd_Q e^{-2s\alpha}\left|\int_{\Omega} K(\theta,x,t)\varphi(\theta,t)\,d\theta\,\right|^2\,dx\,dt\right].
\end{align}

We are now going to deal with the second term on the right-hand side of the previous estimate. To this end, we set the parameter  $\lambda$ to a fixed value large enough. We have
\begin{align}\label{ker_int}
	\left|\int_\Omega K(\theta,x,t)\varphi(\theta,t)\,d\theta\,\right| & = \left|\int_{\Omega} e^{\frac{\sigma^-}{t(T-t)}}K(\theta,x,t)e^{-\frac{\sigma^-}{t(T-t)}}\varphi(\theta,t)\,d\theta\,\right| \notag
	\\	
	&\leq \left[\left(\int_\Omega e^{\frac{2\sigma^-}{t(T-t)}}|K(\theta,x,t)|^2\,d\theta\right)\left(\int_\Omega e^{-\frac{2\sigma^-}{t(T-t)}}|\varphi(x,\theta)|^2\,d\theta\right)\right]^{\frac 12}.
\end{align}

Notice that, since $\lambda$ has been fixed, $\sigma^-$ (and therefore $\sigma^+$) is a constant depending only on $\Omega$ and $\mathcal O$. Now, replacing \eqref{ker_int} into \eqref{carleman_prel} we get
\begin{align*}
	\mathcal{I}(\varphi) \leq C \left[s^3\lambda^4\intd_{\mathcal O\times(0,T)} e^{-2s\alpha}\xi^3 |\varphi|^2\,dx\,dt + \mathcal{K}^2\intd_Q e^{-2s\alpha(x,t)}\left(\int_{\Omega} e^{-\frac{2\sigma^-}{t(T-t)}}|\varphi(\theta,t)|^2\,d\theta\,\right)\,dx\,dt\right].
\end{align*}

Let us now focus on the second term in the right-hand side of the above inequality. Using Fubini's Theorem we get
\begin{align*}
	\intd_Q e^{-2s\alpha(x,t)} \left(\int_{\Omega} e^{-\frac{2\sigma^-}{t(T-t)}}|\varphi(\theta,t)|^2\,d\theta\,\right)\,dx\,dt = \intd_Q e^{-\frac{2\sigma^-}{t(T-t)}}|\varphi(\theta,t)|^2\left(\int_\Omega e^{-2s\alpha(x,t)}\,dx\right)\,d\theta\,dt.
\end{align*}
Notice that, according to \eqref{notation_alpha}, we have
\begin{align*}
	\int_\Omega e^{-2s\alpha(x,t)}\,dx \leq |\Omega| e^{-\frac{2s\sigma^-}{t(T-t)}},
\end{align*}
where $|\Omega|$ stands for the measure of $\Omega$. Hence, we can compute
\begin{align*}
	\intd_Q e^{-\frac{2\sigma^-}{t(T-t)}}|\varphi(\theta,t)|^2\left(\int_\Omega e^{-2s\alpha(x,t)}\,dx\right)\,d\theta\,dt &\leq C \intd_Q e^{-\frac{2(1+s)\sigma^-}{t(T-t)}}|\varphi(\theta,t)|^2\,d\theta\,dt
	\\
	&\leq C\intd_Q e^{-\frac{2s\sigma^+}{t(T-t)}}|\varphi(\theta,t)|^2\,d\theta\,dt 
	\\
	&\leq C \intd_Q e^{-2s\alpha(\theta,t)}|\varphi(\theta,t)|^2\,d\theta\,dt,
\end{align*}	
were we have used Proposition \ref{alpha_min_max_prop} and the definition of $\sigma^+$. Putting all together, we get
\begin{align*}
	\mathcal{I}(\varphi) \leq C \left[s^3\lambda^4\intd_{\mathcal O\times(0,T)} e^{-2s\alpha}\xi^3 |\varphi|^2\,dx\,dt + \mathcal{K}^2\intd_Q e^{-2s\alpha(x,t)}|\varphi(x,t)|^2\,dx\,dt\right].
\end{align*}
Recalling now the definition of $\mathcal{I}(\varphi)$, we then find the following estimate
\begin{align}\label{carleman_prel3}
	s\lambda^2\intd_Q e^{-2s\alpha}\xi & |\nabla \varphi|^2\,dx\,dt + s^3\lambda^4\intd_Q e^{-2s\alpha}\xi^3|\varphi|^2\,dx\,dt \notag
	\\
	&- \mathcal{K}^2\intd_Q e^{-2s\alpha}|\varphi|^2\,dx\,dt \leq C s^3\lambda^4\intd_{\mathcal O\times(0,T)} e^{-2s\alpha}\xi^3 |\varphi|^2\,dx\,dt.
\end{align}
Therefore, thanks to \eqref{xi_est}, we finally obtain
\begin{align*}
	s\lambda^2\intd_Q e^{-2s\alpha}\xi |\nabla \varphi|^2\,dx\,dt + s^3\lambda^4\intd_Q e^{-2s\alpha}\xi^3|\varphi|^2\,dx\,dt \leq C s^3\lambda^4\intd_{\mathcal O\times(0,T)} e^{-2s\alpha}\xi^3 |\varphi|^2\,dx\,dt,
\end{align*}
for all $s>C\mathcal{K}^{\frac 23}T^2$. This, together with \eqref{e6s2} concludes the proof.
\end{proof}
\begin{remark}\label{rem_lambda}
The hypothesis \eqref{K_est_gen} plays a fundamental role in the previous proof. Notice that, according to this assumption, the kernel $K$, as a function of $t$, should behave like 
\begin{align*}
	K(\cdot,\cdot,t)\sim e^{-\frac{C(\Omega,\mathcal O)}{t(T-t)}},
\end{align*}
i.e. it should decay exponentially as $t$ goes to $0^+$ and $T^-$. We stress that this constant $C$ does not depend on the parameter $s$ and, therefore, in the last step of the above proof there is no obstruction in using this parameter for absorbing the third term on the left hand side of the estimate \eqref{carleman_prel3}.
	
The assumption \eqref{K_est_gen} may appear as a quite strong restriction on the admissible kernels. Notwithstanding, it is instead a natural one, since the only thing that we are asking is integrability of $K$ with respect to the Carleman weight. Moreover, we mention that this is the minimum decay that we shall ask for the kernel. Indeed, following the proof of Proposition \ref{carleman_phi_prop} it is clear that imposing a weaker decay (e.g. polynomial) it is not sufficient to obtain the desired inequality.
\end{remark}
\begin{proof}[Proof of Theorem \ref{th2s1}]

First of all, from \eqref{e5s2} we clearly have
\begin{align*}
	s^3\intd_Q e^{-2s\alpha}\xi^3|\varphi|^2\,dx\,dt \leq C s^3\intd_{\mathcal O\times(0,T)} e^{-2s\alpha}\xi^3 |\varphi|^2\,dx\,dt.
\end{align*}
Moreover, due to the definition of the weight function $\alpha$ (see \eqref{weight}) we have the following two estimates:
\begin{enumerate}
	\item[1.] $s^3 e^{-2s\alpha}\xi^3 \leq Cs^3T^{-6}e^{-\frac{Cs}{T^2}}\leq C(T)$
	
	\item [2.] $s^3 e^{-2s\alpha}\xi^3 \geq Ce^{-\frac{Cs}{T^2}}$, if $t\in\Big[\frac T4,\frac 34 T\Big]$
\end{enumerate}
if we choose $s\geq CT^2$. Therefore, we obtain
\begin{align}\label{obs_prel}
	\int_{\frac T4}^{\frac 34 T}\!\!\!\int_\Omega |\varphi|^2\,dx\,dt \leq C e^{\frac {Cs}{T^2}}\intd_{\mathcal O\times(0,T)} |\varphi|^2\,dx\,dt.
\end{align}

Furthermore, due to classical energy estimates, it is easy to check that $t\mapsto \norm{\varphi(t)}{L^2(\Omega)}$ is an increasing function. Hence,
\begin{align*}
	\int_{\frac T4}^{\frac 34 T}\!\!\!\int_\Omega |\varphi(x,t)|^2\,dx\,dt \geq \int_{\frac T4}^{\frac 34 T}\!\!\!\int_\Omega |\varphi(x,0)|^2\,dx\,dt = \frac T2 \norm{\varphi(x,0)}{L^2(\Omega)}^2,
\end{align*}
and from this last estimate and \eqref{obs_prel} we finally obtain \eqref{e12s1}.
\end{proof}

Once we have the observability inequality, the control $v$ driving the solution $y$ to \eqref{e1s1} from the initial datum $y_0$ to zero can be identified as $v=\left.\varphi\right|_{\mathcal O}$, where $\varphi$ is the solution to the adjoint equation \eqref{e11s1} corresponding to an initial datum $\varphi_T\in L^2(\Omega)$ which is the unique minimizer of the functional
\begin{align}\label{functional_J}
	J\left(\varphi^T\right):= \frac 12\intd_{\mathcal O\times (0,T)} |\varphi|^2\,dxdt + \int_\Omega y_0(x)\varphi(x,0)\,dx.
\end{align}

In more detail, inequality \eqref{e12s1} ensures the coercivity of the above functional. The proof of this fact being classical, we will omit it here. 

Let us conclude this Section with a brief discussion on cost of null controllability for problem \eqref{e1s1}. We recall from \eqref{cost} that this quantity is defined as 
\begin{align*}
	\mathcal C(y_0)=\D\inf_{v\in{\mathcal O\times(0,T)}}\norm{v}{L^2(\mathcal O\times(0,T))}\,.
\end{align*}

On the other hand, it is well-known that this controllability cost may also be characterized in terms of the constant in the observability inequality \eqref{e12s1}. In more detail, we have 
\begin{align*}
	\mathcal C(y_0)=\inf_{C>0}\left\{\norm{\varphi(x,0)}{L^2(\Omega)} \leq C^2 \intd_{{ \mathcal O}\times(0,T)}|\varphi|^{2}\,dx\,dt\,\right\}.
\end{align*}
Applied to our problem, this gives the estimate
\begin{align}\label{cost_est}
	\mathcal C(y_0)\leq\exp\left[C\left(1+\frac{1}{T}+\mathcal{K}^{\frac 23}\right)\right].
\end{align}

In particular, as it is natural to expect for a heat-like equation, the null controllability cost blows-up as $T\to 0^+$. This means that, even if equation \eqref{e1s1} is null controllable for any time $T>0$, the cost of this process is growing exponentially as the time interval $(0,T)$ shrinks. Lastly, observe that $\mathcal K^{\frac 23}$ in \eqref{cost_est} is in accordance with the classical results on the controllability of heat equations with bounded potentials (see, e.g., \cite{fernandez2000cost}).

\section{Removing the assumption on the decay of the kernel in $t=0$}\label{decay_sec}

We are interested in showing that the assumption \eqref{K_est_gen} on the decay in time of the kernel $K$ as $t$ goes to $0^+$ and $T^-$ can be substituted by the following one, which does not requires any decay at $t=0$:
\begin{align}\label{K_est_weak}
	\mathcal{M}:=\sup_{(x,t)\in\overline{Q}}\exp\left(\frac{\mathcal{B}}{T-t}\right)\int_{\Omega} |K(x,\theta,t)|\,d\theta <+\infty.
\end{align}
Let us now introduce the new weights $\beta$ and $\gamma$ defined as
\begin{align*}
	\beta(x,t):=\frac{e^{4\lambda\norm{\eta^0}{\infty}}-e^{\lambda\left(2\norm{\eta^0}{\infty}+\eta^0(x)\right)}}{\ell(t)}, \;\;\; \gamma(x,t):=\frac{e^{\lambda\left(2\norm{\eta^0}{\infty}+\eta^0(x)\right)}}{\ell(t)},
\end{align*}
with
\begin{align*}
	\ell(t):=\begin{cases}
		\D T^2/4, & t\in\left[0,T/2\right]
		\\
		\D t(T-t), & t\in\left[T/2, T\right],
	\end{cases}
\end{align*}
and the parameters $s$ and $\lambda$ are fixed and taken as in Proposition \ref{carleman z_prop}. Then, we can state the following refined version of the Carleman inequality \eqref{carleman z}.

\begin{proposition}
There exist a positive constants $C$, depending on $T$, $s$ and $\lambda$, such that, for all $F\in L^2(Q)$ and $z_T\in L^2(\Omega)$, the solution $z$ to \eqref{syst_z} satisfies 
\begin{align}\label{carleman_z_ref}
	\norm{z(x,0)}{L^2(\Omega)}^2 + \intd_Q e^{-2s\beta}\gamma^3|z|^2\,dxdt \leq C \left[\intd_{\mathcal O\times(0,T)} e^{-2s\beta}\gamma^3 |z|^2\,dx\,dt + \intd_Q e^{-2s\beta}|F|^2\,dx\,dt\right].
\end{align}
\end{proposition}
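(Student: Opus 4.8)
The plan is to exploit the fact that the modified weights $\beta$ and $\gamma$ coincide with the standard Carleman weights $\alpha$ and $\xi$ of \eqref{weight} on the second half $[T/2,T]$ of the time interval, while on $[0,T/2]$ they are \emph{frozen} in time (since $\ell(t)\equiv T^2/4$ there) and hence bounded between two strictly positive constants. This dichotomy lets me combine the global Carleman inequality \eqref{carleman z}, which is effective precisely where the weight is singular at $t=T$, with energy (dissipation) estimates for the backward parabolic equation \eqref{syst_z}, which propagate information from $t=T/2$ down to $t=0$.

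First I would compare the two families of weights. On $[T/2,T]$ one has $\ell(t)=t(T-t)$, so $\beta=\alpha$ and $\gamma=\xi$ identically; on $[0,T/2]$ one has $t(T-t)\le T^2/4=\ell(t)$, whence $\alpha\ge\beta$, so that $e^{-2s\beta}\ge e^{-2s\alpha}$ on all of $Q$. Moreover, $e^{-2s\alpha}\xi^3\to 0$ as $t\to 0^+$ while $e^{-2s\beta}\gamma^3$ is a strictly positive constant in $t$ on $[0,T/2]$; hence $e^{-2s\alpha}\xi^3\le C\,e^{-2s\beta}\gamma^3$ throughout $Q$, with $C$ depending on $T,s,\lambda$. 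Consequently the right-hand side of \eqref{carleman z}, written with the $\alpha,\xi$ weights, is dominated (up to the fixed factor $s^3\lambda^4$) by the right-hand side of \eqref{carleman_z_ref}. Applying \eqref{carleman z} and keeping only the zeroth-order term of $\mathcal I(z)$ restricted to $[T/2,T]$, where the two weight families agree, already yields control of $\int_{T/2}^{T}\!\!\int_\Omega e^{-2s\beta}\gamma^3|z|^2\,dx\,dt$ by the desired right-hand side.

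It then remains to recover the term $\norm{z(x,0)}{L^2(\Omega)}^2$ and the integral over $[0,T/2]$. Here I would use that, for \eqref{syst_z}, the map $t\mapsto\norm{z(t)}{L^2(\Omega)}^2$ is increasing up to the source, since $\tfrac{d}{dt}\norm{z}{L^2(\Omega)}^2=2\norm{\nabla z}{L^2(\Omega)}^2+2(F,z)_{L^2(\Omega)}$; a standard Gronwall argument gives $\norm{z(t_1)}{L^2(\Omega)}^2\le C\big(\norm{z(t_2)}{L^2(\Omega)}^2+\norm{F}{L^2(Q)}^2\big)$ for $t_1\le t_2$, with $C=C(T)$. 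Since $e^{-2s\beta}\gamma^3$ is bounded between positive constants on $[0,T/2]$, both $\int_0^{T/2}\!\!\int_\Omega e^{-2s\beta}\gamma^3|z|^2\,dx\,dt$ and $\norm{z(x,0)}{L^2(\Omega)}^2$ are controlled by $\sup_{[0,T/2]}\norm{z(t)}{L^2(\Omega)}^2$, hence by $\norm{z(T/2)}{L^2(\Omega)}^2+\norm{F}{L^2(Q)}^2$. Finally I would absorb $\norm{z(T/2)}{L^2(\Omega)}^2$ into the already-estimated integral over $[T/2,T]$: on a subinterval such as $[T/2,3T/4]$ the weight $e^{-2s\beta}\gamma^3$ is bounded below, so by the monotonicity above $\norm{z(T/2)}{L^2(\Omega)}^2\le C\big(\int_{T/2}^{3T/4}\norm{z(t)}{L^2(\Omega)}^2\,dt+\norm{F}{L^2(Q)}^2\big)\le C\big(\int_{T/2}^{T}\!\!\int_\Omega e^{-2s\beta}\gamma^3|z|^2\,dx\,dt+\norm{F}{L^2(Q)}^2\big)$. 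Summing the three contributions produces the left-hand side of \eqref{carleman_z_ref}, while the source integrals on $[0,T/2]$ are dominated by $\int_Q e^{-2s\beta}|F|^2\,dx\,dt$ because $e^{-2s\beta}$ is bounded below there.

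I expect the main obstacle to lie in the bookkeeping at the junction $t=T/2$ and in checking that every constant generated by freezing the weight on $[0,T/2]$ and by the Gronwall estimates is finite — which is exactly why the statement only claims a constant $C$ depending on $T$, $s$ and $\lambda$, instead of the explicit, parameter-tracked constant of Proposition \ref{carleman z_prop}. Conceptually, once the weight ceases to decay at $t=0$, the exponential smallness that previously allowed $s$ to absorb lower-order terms is no longer available near the initial time, so one genuinely has to give up the constructive, $\mathcal K$-explicit control of the constant on this part of the interval.
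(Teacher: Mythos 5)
Your proposal is correct and follows essentially the same route the paper itself indicates for this proposition: combining the comparison $\beta\leq\alpha$ in $Q$ (so that the weights coincide on $[T/2,T]$ and are frozen, hence bounded above and below, on $[0,T/2]$) with the Carleman estimate \eqref{carleman z} and backward energy/Gronwall estimates, which is precisely the standard argument of \cite{fernandez2004local} that the paper cites. One small bookkeeping fix: the $F$-terms produced by your Gronwall steps should be written as norms over $\Omega\times(0,3T/4)$ rather than all of $Q$, since $e^{-2s\beta}\to 0$ as $t\to T^-$ and the full unweighted norm $\norm{F}{L^2(Q)}^2$ is not dominated by $\intd_Q e^{-2s\beta}|F|^2\,dx\,dt$; restricted to $(0,3T/4)$, where $e^{-2s\beta}$ is bounded below by a constant depending on $T$, $s$ and $\lambda$, the absorption goes through exactly as you describe.
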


The proof of this Proposition is standard. It combines energy estimates and the fact that $\beta\leq\alpha$ in $Q$ (see, e.g., \cite{fernandez2004local}). Furthermore, using \eqref{carleman_z_ref} and the classical approach presented in several works (\cite{fernandez2004local,fursikov1996controllability,gueye2013insensitizing,tao2016null}), it is possible to obtain the following result.

\begin{proposition}\label{control_prop_F}
Let $T>0$ and $e^{s\beta}F\in L^2(Q)$. Then, for any $y_0\in L^2(\Omega)$ there exists a control function $v\in L^2(\mathcal O\times(0,T))$ such that the associated solution to \eqref{e1s1} is in the space
\begin{align*}
	\mathcal E:=\left\{y\,:\,e^{s\beta}y\in L^2(Q)\right\}.
\end{align*}
Moreover, there exists a positive constant $C=C(T,s,\lambda)$ such that it holds the estimate
\begin{align}\label{energy_weight}
	\intd_{\mathcal O\times(0,T)} e^{2s\beta}\gamma^{-3} |v|^2\,dx\,dt + \intd_Q e^{2s\beta}|y|^2\,dx\,dt 
	 \leq C \left(\norm{y_0}{L^2(\Omega)}^2 + \norm{e^{s\beta}F}{L^2(Q)}^2\right).
\end{align}
\end{proposition}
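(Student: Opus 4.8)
The plan is to produce the control by the classical variational (Fursikov--Imanuvilov) duality, which converts the weighted information in \eqref{carleman_z_ref} directly into the weighted controllability estimate \eqref{energy_weight}. Since $s,\lambda$ are fixed as in Proposition \ref{carleman z_prop}, all constants may depend on $T,s,\lambda$; in particular $\gamma$ is bounded above and below on $\overline Q$ by positive constants, so that $e^{-2s\beta}|w|^2\leq C\,e^{-2s\beta}\gamma^3|w|^2$. At this stage I would work with the pure heat operator, regarding the nonlocal term of \eqref{e1s1} as part of the given source, i.e. I prove weighted null controllability for $y_t-\Delta y=v\mathbf 1_{\mathcal O}+F$ with $e^{s\beta}F\in L^2(Q)$; the genuine nonlocal term is reinserted afterwards by a fixed point. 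First I would introduce
\[
	P_0:=\left\{w\in C^2(\overline Q)\,:\,w=0\text{ on }\Sigma\right\},\qquad \mathcal L^\ast w:=w_t+\Delta w,
\]
and the symmetric bilinear form
\[
	a(w,\tilde w):=\intd_Q e^{-2s\beta}(\mathcal L^\ast w)(\mathcal L^\ast\tilde w)\,dx\,dt+\intd_{\mathcal O\times(0,T)}e^{-2s\beta}\gamma^3\,w\,\tilde w\,dx\,dt.
\]
Applying \eqref{carleman_z_ref} to $z=w$ with the source $F=\mathcal L^\ast w$ shows that its right-hand side equals $C\,a(w,w)$; hence $a(w,w)=0$ forces $w\equiv 0$, so $a$ is a scalar product, and I would let $P$ be the completion of $P_0$ for $\|w\|_P:=a(w,w)^{1/2}$.

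Next I would introduce the linear functional
\[
	\ell(w):=\int_\Omega y_0(x)\,w(x,0)\,dx+\intd_Q F\,w\,dx\,dt
\]
and, once $\ell$ is shown continuous on $P$, invoke the Lax--Milgram (Riesz) theorem to obtain a unique $\hat w\in P$ with $a(\hat w,w)=\ell(w)$ for all $w\in P$. The controlled pair is then defined, up to the signs dictated by the integration by parts below, by
\[
	\hat y:=e^{-2s\beta}\mathcal L^\ast\hat w\text{ in }Q,\qquad \hat v:=-e^{-2s\beta}\gamma^3\,\hat w\big|_{\mathcal O\times(0,T)},
\]
so that $a(\hat w,\hat w)=\intd_Q e^{2s\beta}|\hat y|^2\,dx\,dt+\intd_{\mathcal O\times(0,T)}e^{2s\beta}\gamma^{-3}|\hat v|^2\,dx\,dt$. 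Writing the variational identity against arbitrary $w\in P_0$ and integrating by parts in $Q$, the arbitrariness of $w$ together with the free boundary values $w(\cdot,0)$ and $w(\cdot,T)$ identifies $\hat y$ as the solution of the heat equation with right-hand side $\hat v\mathbf 1_{\mathcal O}+F$ satisfying $\hat y(\cdot,0)=y_0$ and, crucially, $\hat y(\cdot,T)=0$; in particular $e^{s\beta}\hat y\in L^2(Q)$, i.e. $\hat y\in\mathcal E$. The estimate \eqref{energy_weight} then drops out of $\|\hat w\|_P^2=a(\hat w,\hat w)=\ell(\hat w)\leq\|\ell\|_{P'}\,\|\hat w\|_P$.

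The main work, and the place where the two extra left-hand terms of \eqref{carleman_z_ref} are indispensable, is the continuity of $\ell$ on $P$. The datum term is bounded by $\norm{y_0}{L^2(\Omega)}\norm{w(x,0)}{L^2(\Omega)}$, and $\norm{w(x,0)}{L^2(\Omega)}^2\leq C\,a(w,w)$ precisely because of the $\norm{z(x,0)}{L^2(\Omega)}^2$ term in \eqref{carleman_z_ref}. The source term is split as $\intd_Q F\,w=\intd_Q(e^{s\beta}F)(e^{-s\beta}w)\,dx\,dt$ and estimated by $\norm{e^{s\beta}F}{L^2(Q)}\,\|e^{-s\beta}w\|_{L^2(Q)}$, where $\|e^{-s\beta}w\|_{L^2(Q)}\leq C\,a(w,w)^{1/2}$ follows from the $\intd_Q e^{-2s\beta}\gamma^3|w|^2\,dx\,dt$ term and the lower bound on $\gamma$. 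This yields $|\ell(w)|\leq C(\norm{y_0}{L^2(\Omega)}+\norm{e^{s\beta}F}{L^2(Q)})\|w\|_P$ and closes the argument. The remaining points — density of $P_0$ in $P$, the regularity justifying the integrations by parts, and the rigorous meaning of the traces $\hat y(\cdot,0),\hat y(\cdot,T)$ — are routine and are treated exactly as in \cite{fernandez2004local,fursikov1996controllability,tao2016null}.

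Finally, the nonlocal term is reinserted by a fixed-point scheme, which is where assumption \eqref{K_est_weak} enters and where the loss of the explicit cost originates. Given $\bar y$ with $e^{s\beta}\bar y\in L^2(Q)$, one applies the proposition just proved with $F=-\int_\Omega K(\cdot,\theta,\cdot)\,\bar y(\theta,\cdot)\,d\theta$; assumption \eqref{K_est_weak} guarantees $e^{s\beta}F\in L^2(Q)$ with norm controlled by $\mathcal M\,\|e^{s\beta}\bar y\|_{L^2(Q)}$, using the decay of $K$ at $t=T^-$ to absorb the blow-up of $e^{s\beta}$ there (no decay at $t=0$ is needed because $\beta$ is frozen on $[0,T/2]$). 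The uniform estimate \eqref{energy_weight} then makes the map $\bar y\mapsto \hat y$ well defined and continuous on a ball of $\mathcal E$ and supplies the compactness required to produce a fixed point, which is precisely a controlled solution of the full nonlocal equation \eqref{e1s1}. I expect this last step — verifying the self-map and compactness of the fixed-point operator under the weaker hypothesis \eqref{K_est_weak} — to be the main obstacle, since it is here that the interplay between the weight $e^{s\beta}$ and the one-sided decay of the kernel must be controlled carefully.
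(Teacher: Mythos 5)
Your argument is correct and coincides with the paper's intended proof: the paper establishes this proposition only by invoking the classical Fursikov--Imanuvilov variational method based on \eqref{carleman_z_ref} (citing \cite{fernandez2004local,fursikov1996controllability,gueye2013insensitizing,tao2016null}), which is exactly what you reconstruct --- note that your closing fixed-point paragraph belongs to the proof of Theorem \ref{control_thm2}, not to this proposition. One small slip: $\gamma$ is bounded below but \emph{not} above on $\overline{Q}$ (it blows up as $t\to T^-$); fortunately your estimates only use the lower bound, and the membership $v\in L^2(\mathcal O\times(0,T))$ still follows since $e^{2s\beta}\gamma^{-3}\geq c>0$ on $Q$, the exponential growth of $e^{2s\beta}$ dominating the polynomial blow-up of $\gamma^{3}$ as $t\to T^-$.
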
 
Notice that, $y$ being in the space $\mathcal E$, in particular we have 
\begin{align*}
	\intd_Q e^{2s\beta}|y|^2\,dx\,dt < +\infty.
\end{align*}

Since the weight $\beta$ blows-up as $t\to T^-$, the boundedness of the above integral yields $y(x,T)=0$. As a consequence, we then have the following controllability result.
\begin{proposition}
Let $T>0$ and $e^{s\beta}F\in L^2(Q)$. Then, for any $y_0\in L^2(\Omega)$ there exists a control function $v\in L^2(\mathcal O\times(0,T))$ such that the associated solution to \eqref{e1s1} satisfies $y(x,T)=0$.
\end{proposition}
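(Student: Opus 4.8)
The plan is to obtain this statement as an immediate consequence of Proposition \ref{control_prop_F}, converting the weighted integrability of the state into the pointwise final condition by exploiting the singularity of $\beta$ at $t=T^-$. First I would apply Proposition \ref{control_prop_F} to the given data $y_0$ and $F$: this produces a control $v\in L^2(\mathcal O\times(0,T))$ whose associated solution $y$ to \eqref{e1s1} belongs to the space $\mathcal E$. By the very definition of $\mathcal E$ this means that
\begin{align*}
	\intd_Q e^{2s\beta}|y|^2\,dx\,dt<+\infty,
\end{align*}
so the whole problem reduces to deducing $y(\cdot,T)=0$ from this single weighted bound.

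Next I would record that $y$ lies in the regularity class \eqref{y_reg} and hence admits a representative in $C([0,T];L^2(\Omega))$, so that the trace $y(\cdot,T)$ is a well-defined element of $L^2(\Omega)$; here I use that $e^{s\beta}\geq 1$ on $Q$ (since the numerator of $\beta$ is positive), which guarantees $F\in L^2(Q)$ and lets the standard parabolic theory apply. The one quantitative ingredient needed is a lower bound for the weight close to the final time. Since the numerator of $\beta$ is bounded below by $\sigma^-$ and $\ell(t)=t(T-t)$ for $t\in[T/2,T]$, one has
\begin{align*}
	\beta(x,t)\geq \frac{\sigma^-}{t(T-t)},\qquad (x,t)\in\Omega\times[T/2,T].
\end{align*}

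The core of the argument is then a contradiction. Suppose $y(\cdot,T)\neq 0$ in $L^2(\Omega)$; by continuity in time there exist $\delta\in(0,T/2)$ and $c>0$ such that $\norm{y(\cdot,t)}{L^2(\Omega)}^2\geq c$ for all $t\in[T-\delta,T]$. Combining this lower bound with the estimate on $\beta$ yields
\begin{align*}
	\intd_Q e^{2s\beta}|y|^2\,dx\,dt\geq c\int_{T-\delta}^{T} \exp\left(\frac{2s\sigma^-}{t(T-t)}\right)dt=+\infty,
\end{align*}
because, as $t\to T^-$, the integrand behaves like $\exp\bigl(C/(T-t)\bigr)$ with $C>0$ and is therefore non-integrable near $t=T$. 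This contradicts the finiteness obtained from $y\in\mathcal E$, and so $y(\cdot,T)=0$, which is exactly the claimed controllability.

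I do not expect any genuine obstacle here, as the statement is essentially a corollary of Proposition \ref{control_prop_F} and formalizes the remark preceding it. The only points deserving a little care are the justification of the trace $y(\cdot,T)$ through the continuous-in-time embedding and the verification that the final-time singularity of $\beta$ is strong enough to force divergence of the integral, both of which are secured by the explicit form of $\ell$ on $[T/2,T]$ and the positivity of $\sigma^-$.
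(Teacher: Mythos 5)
Your proposal is correct and follows exactly the paper's route: the paper also obtains this proposition as an immediate corollary of Proposition \ref{control_prop_F}, noting that $y\in\mathcal E$ gives $\intd_Q e^{2s\beta}|y|^2\,dx\,dt<+\infty$ and that the blow-up of $\beta$ as $t\to T^-$ then forces $y(x,T)=0$. You merely make explicit the details the paper leaves implicit (the $C([0,T];L^2(\Omega))$ trace via \eqref{y_reg}, the lower bound $\beta\geq\sigma^-/(t(T-t))$ on $[T/2,T]$, and the divergence argument by contradiction), all of which are sound.
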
 
 
The above discussion can now be applied to problem \eqref{e1s1}, and we have the following result.
\begin{theorem}\label{control_thm2} 
Let $T>0$ and assume that $K$ satisfies \eqref{K_est_weak}. Then, for any $y_0\in L^2(\Omega)$, there exists a control function $v\in L^2(\mathcal O\times(0,T))$ such that the associated solution $y$ to \eqref{e1s1} satisfies $y(x,T) = 0$.	
\end{theorem}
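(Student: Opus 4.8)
The plan is to recast the nonlocal equation \eqref{e1s1} as a heat equation with a source term depending on the solution, and then to produce a solution by a fixed-point argument built on the weighted controllability result of Proposition \ref{control_prop_F}. I would work in the weighted space $X:=\{z\,:\,e^{s\beta}z\in L^2(Q)\}$ with norm $\|e^{s\beta}z\|_{L^2(Q)}$, and for each $z\in X$ set
\begin{align*}
	F_z(x,t):=-\int_\Omega K(x,\theta,t)z(\theta,t)\,d\theta.
\end{align*}
Applying Proposition \ref{control_prop_F} with this right-hand side produces a control $v_z$ and a state $y_z$ solving the kernel-free heat equation driven by $v_z\mathbf 1_{\mathcal O}+F_z$, with $y_z(\cdot,T)=0$ and the weighted bound \eqref{energy_weight}. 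Any fixed point $y=\Lambda(y):=y_y$ then satisfies $\partial_t y-\Delta y+\int_\Omega K(x,\theta,t)y\,d\theta=v_y\mathbf 1_{\mathcal O}$, i.e. it solves \eqref{e1s1} and is steered to rest at time $T$; moreover $e^{s\beta}y\in L^2(Q)$ with $\beta\to+\infty$ as $t\to T^-$ forces $y(\cdot,T)=0$.

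The heart of the matter, and the step I expect to be the main obstacle, is to show that $\Lambda$ is well defined and contractive, which reduces to the weighted source estimate
\begin{align*}
	\|e^{s\beta}F_z\|_{L^2(Q)}\leq C\,\|e^{s\beta}z\|_{L^2(Q)}.
\end{align*}
To obtain it I would mimic the weight juggling used in the proof of Proposition \ref{carleman_phi_prop}. Writing $\beta(x,t)=\sigma(x)/\ell(t)$ with $\sigma^-\leq\sigma(x)\leq\sigma^+$, I would insert the factors $e^{\pm s\beta(\theta,t)}$ inside the $\theta$-integral, apply Cauchy--Schwarz, and invoke \eqref{K_est_weak} to bound $\int_\Omega|K(x,\theta,t)|\,d\theta\leq\mathcal M\,e^{-\mathcal B/(T-t)}$. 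After Fubini this produces, in front of $\|e^{s\beta}z(\cdot,t)\|_{L^2(\Omega)}^2$, a pointwise-in-$t$ factor of the form
\begin{align*}
	\exp\left(\frac{2s(\sigma^+-\sigma^-)}{\ell(t)}-\frac{\mathcal B}{T-t}\right).
\end{align*}
The crucial observation is that on $[0,T/2]$ the weight $\beta$ is bounded (there $\ell\equiv T^2/4$), so no compensation is needed near $t=0$, while on $[T/2,T]$ one has $\ell(t)=t(T-t)$ and the exponent equals $\tfrac{1}{T-t}\bigl(\tfrac{2s(\sigma^+-\sigma^-)}{t}-\mathcal B\bigr)$. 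Taking $\mathcal B$ large enough, depending only on $s$, $\lambda$, $T$, $\Omega$ and $\mathcal O$, keeps this quantity bounded and in fact renders it as small as desired. This is precisely the role of the one-sided decay hypothesis \eqref{K_est_weak}, and it is the exact analogue, for the one-sided weight $\beta$, of Proposition \ref{alpha_min_max_prop}.

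With the source estimate in hand the argument closes. Since $F_z$ is linear in $z$, the map $\Lambda$ is affine, and combining \eqref{energy_weight} with the source estimate shows that its linear part has operator norm controlled by the small factor above; choosing $\mathcal B$ large makes this norm strictly less than one, so $\Lambda$ leaves a sufficiently large ball of $X$ invariant and admits a fixed point. Following the classical scheme of \cite{fernandez2004local}, one may equivalently invoke a Schauder-type fixed-point theorem, the required compactness of $\Lambda$ coming from the parabolic regularity $y_z\in L^2(0,T;H^1_0(\Omega))\cap H^1(0,T;H^{-1}(\Omega))$ through an Aubin--Lions embedding, with \eqref{energy_weight} supplying the invariant set. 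Either way one obtains a controlled solution of \eqml{e1s1}; I would stress that, in contrast with Theorem \ref{th2s1}, this procedure is of abstract fixed-point nature and thus yields no explicit control and no information on the controllability cost.
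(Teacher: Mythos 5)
Your proposal is correct in substance and shares the paper's overall architecture: freeze the nonlocal term to obtain an auxiliary linear problem, apply the weighted controllability result of Proposition \ref{control_prop_F} to it, bound the source $F_z$ in the weighted norm using \eqref{K_est_weak}, and close with a fixed point, with $y(\cdot,T)=0$ forced by membership in $\mathcal E$ and the blow-up of $\beta$ at $t=T^-$. Your weight computation --- the factor $\exp\left(\frac{2s(\sigma^+-\sigma^-)}{\ell(t)}-\frac{\mathcal B}{T-t}\right)$, harmless on $[0,T/2]$ where $\ell\equiv T^2/4$, and controlled on $[T/2,T]$ once $\mathcal B$ dominates $4s(\sigma^+-\sigma^-)/T$ --- is exactly the mechanism the paper leaves implicit in its one-line estimate $\intd_Q\left(e^{s\beta}\int_\Omega Kw\,d\theta\right)^2dxdt\leq \mathcal M^2R^2$, and making it explicit is an improvement: the paper never pins down $\mathcal B$, and both arguments really prove the statement for $\mathcal B=\mathcal B(s,\lambda,T,\Omega,\mathcal O)$ large enough. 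Indeed the paper's invariance step ``$\leq CR^2$ for $R$ large enough'' conceals the same smallness requirement ($C\mathcal M^2_{\mathrm{eff}}<1$, not curable by enlarging $R$) that you surface honestly. Where you genuinely diverge is the closing tool: the paper keeps $\Lambda$ multivalued --- Proposition \ref{control_prop_F} asserts only the \emph{existence} of some control --- and therefore invokes Kakutani's fixed-point theorem, verifying convexity and closedness of $\Lambda(w)$, invariance of $\mathcal E_R$, compactness via Aubin--Lions, and upper hemicontinuity; you instead propose a Banach contraction for a single-valued affine $\Lambda$, with Schauder as a fallback.

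One caveat on the contraction route: to run Banach you must justify that $\Lambda$ is single-valued and \emph{affine}, which Proposition \ref{control_prop_F} as stated does not provide. You need to fix a canonical selection of the control, e.g.\ the Fursikov--Imanuvilov variational construction underlying \eqref{energy_weight} (or the minimal-norm control in the weighted space), whose solution map $(y_0,F)\mapsto(y,v)$ is linear; only then does $y_{z_1}-y_{z_2}$ solve the problem with data $(0,F_{z_1-z_2})$ and the Lipschitz estimate follow. Your ``equivalently invoke Schauder'' fallback faces the same selection issue (one needs a continuous single-valued selection), which is precisely why the paper resorts to the multivalued Kakutani theorem. With the linear selection in place your argument is sound, and it buys something the paper's does not: a constructive Picard iteration and uniqueness of the fixed point of the scheme, at the price of a possibly larger $\mathcal B$ in the hypothesis to force the contraction constant below one. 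Your closing observation that neither route recovers an observability constant, hence no information on the cost $\mathcal C(y_0)$, matches the paper's own remark.
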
	
\begin{proof}
For our proof we are going to employ a fixed point strategy. For $R>0$, we define 
\begin{align*}
	\mathcal E_R:=\left\{ w\in\mathcal E\,:\, \norm{e^{s\beta}w}{L^2(Q)}\leq R\right\},
\end{align*}
which is a bounded, closed and convex subset of $L^2(Q)$. For any $w\in\mathcal E_R$, let us consider the control problem 
\begin{align}\label{control_w}
	\begin{cases}
		\D y_t - \Delta y + \int_\Omega K(x,\theta,t)w(\theta,t)\,d\theta = v\mathbf{1}_{\mathcal O}, & (x,t)\in Q
		\\
		y = 0, & (x,t)\in\Sigma
		\\
		y(x,0) = y_0(x), & x\in\Omega.
	\end{cases}
\end{align}	
Notice that \eqref{control_w} is different from the original system \eqref{e1s1}, since it is linear in the $y$ variable. Now, from hypothesis \eqref{K_est_weak} we have that
\begin{align*}
	\intd_Q\left(e^{s\beta}\int_\Omega K(x,\theta,t)w(\theta,t)\,d\theta\right)^2\,dxdt \leq \mathcal M^2\intd_Q e^{2s\beta}w^2e^{-2s\beta}\,dxdt \leq \mathcal M^2R^2.
\end{align*}

Therefore, from Proposition \ref{control_prop_F} we have that \eqref{control_w} is null controllable, i.e. for any $y_0\in L^2(\Omega)$, there exists a control function $v\in L^2(\mathcal O\times(0,T))$ such that the associated solution $y$ to \eqref{control_w} satisfies $y(x,T) = 0$. 

In order to conclude our proof and obtain the same controllability result for $w=y$, we shall apply Kakutani's fixed point theorem (see \cite[Theorem 2.3]{fernandez2006global}, \cite{kakutani1941generalization}). For any $w\in \mathcal E_R$, we define the multivalued map $\Lambda: \mathcal E_R\mapsto 2^{\mathcal E}$ such that
\begin{align*}
	\Lambda(w) = \left\{y\,:\,y\in\mathcal E \textrm{ and there exists } v \textrm{ such that }	\intd_{\mathcal O\times(0,T)} e^{2s\beta}\gamma^{-3} |v|^2\,dx\,dt \leq C \left(R^2 + \norm{y_0}{L^2(\Omega)}^2\right). \right\}
\end{align*} 	

It is easy to check that $\Lambda(w)$ is a nonempty, closed and convex subset of $L^2(Q)$. Moreover, by \eqref{K_est_weak} and \eqref{energy_weight} and arguing as before we have
\begin{align}\label{estimate}
	\intd_{\mathcal O\times(0,T)} e^{2s\beta}\gamma^{-3} |v|^2\,dx\,dt & + \intd_Q e^{2s\beta}|y|^2\,dx\,dt \notag 
	\\
	&\leq C \left[\norm{y_0}{L^2(\Omega)}^2 + \intd_Qe^{2s\beta}\left(\int_\Omega K(x,\theta,t)y(\theta,t)\,d\theta\right)^2\,dxdt\right] \notag 
	\\
	&\leq C\left(\mathcal M^2R^2 + \norm{y_0}{L^2(\Omega)}^2\right) \leq CR^2,
\end{align}
for $R$ large enough. Hence, up to a multiplicative constant we have $\Lambda(\mathcal E_R)\subset \mathcal E_R$. 

Let $\left\{w_k\right\}$ be a sequence in $\mathcal E_R$. Then the corresponding solutions $\left\{y_k\right\}$ are bounded in $L^2(0,T;H^1_0(\Omega))\cap H^1(0,T;H^{-1}(\Omega))$ and, therefore, $\Lambda(\mathcal E_R)$ is compact in $L^2(Q)$ by Aubin-Lions' Theorem (\cite{simon1986compact}).

Notice that, for any $w\in \mathcal E_R$, we have at least one control $v$ such that the corresponding solution $y$ belongs to $\mathcal E_R$. Hence, for the sequence $\left\{w_k\right\}$ we can find a sequence of controls $\left\{v_k\right\}$ such that the corresponding solutions $\left\{y_k\right\}$ is in $L^2(Q)$. Let $w_k\to w$ in $\mathcal E_R$ and $y_k\in \Lambda(w_k)$, $y_k\to y$ in $L^2(Q)$. We want to show that $y\in\Lambda(w)$. By the regularity of the solutions and \eqref{estimate} it follows (selecting a subsequence if necessary) that
\begin{align*}
	& v_k\rightharpoonup v \textrm{ weakly in } L^2(\mathcal O\times(0,T)),
	\\
	& y_k\rightharpoonup y \textrm{ weakly in } L^2(0,T;H^1_0(\Omega))\cap H^1(0,T;H^{-1}(\Omega)),
	\\
	& y_k\to y \textrm{ strongly in } L^2(Q). 
\end{align*}
Then we obtain $y\in L^2(Q)$ and, letting $k\to +\infty$ in the system 
\begin{align*}
	\begin{cases}
		\D (y_k)_t - \Delta y_k + \int_\Omega K(x,\theta,t)w_k(\theta,t)\,d\theta = v_k\mathbf{1}_{\mathcal O}, & (x,t)\in Q
		\\
		y_k = 0, & (x,t)\in\Sigma
		\\
		y_k(x,0) = y_0(x), & x\in\Omega.
	\end{cases}
\end{align*} 
we can conclude that the couple $(y,v)$ satisfies \eqref{control_w}, i.e. $\Lambda(w) = y$. Thus the map $\Lambda$ is upper hemicontinuous.

Therefore, all the assumptions of Kakutani's fixed point theorem are fulfilled and we infer that there is at least one $y\in \mathcal E_R$ such that $y=\Lambda(y)$. By the definition of $\Lambda$, this implies that there exists at least one pair $(u,y)$ satisfying the
conditions of Theorem \ref{control_thm2}. The fact that $y(x,T) = 0$ in $\Omega$ comes from the definition of the space $\mathcal E$ and the weight function $\beta$. Hence, our assertion is proved.	
\end{proof}
\begin{remark}
As we were anticipating in Section \ref{intro_sec}, even though the approach of Theorem \ref{control_thm2} has the advantage of not requiring any decay in the kernel as $t$ goes to $0^+$, due to the nature of the employed fixed point argument we lose the uniqueness of the control function $v$. In particular, we are not able to identify a distinguished control with a constructive procedure. As a consequence of this fact, we also lose information about the cost of null controllability. Indeed, when a control can be computed by minimizing, for instance, the functional \eqref{functional_J}, the null controllability cost is related to the square root of the constant in the observability inequality \eqref{e12s1}. On the other hand, since the proof of Theorem \ref{control_thm2} does not requires any observability, in that case we cannot recover any explicit expression for $\mathcal C(y_0)$.
\end{remark}

\section{Extension to semilinear problems}\label{nl_sec}

The approach of Theorem \ref{control_thm} may be combined with the methodology presented in \cite{fabre1995approximate,fernandez1999approximate,fernandez1997null,zuazua1991exact} to deduce similar controllability results for the semilinear heat equation with globally Lipschitz nonlinearity  
\begin{align}\label{heat_nl}
\begin{cases}
\D y_t - \Delta y + \int_\Omega K(x,\theta,t)y(\theta,t)\,d\theta = f(y) + v\mathbf{1}_{\mathcal O}, & (x,t)\in Q
\\
y = 0, & (x,t)\in\Sigma
\\
y(x,0) = y_0(x), & x\in\Omega.
\end{cases}
\end{align}
In more detail, it is possible to prove the following result.
\begin{theorem}\label{control_thm_nl}
Assume $f\in C^1(\R)$ is globally Lipschitz with $f(0)=0$. Then, given any $y_0\in L^2(\Omega)$ and $T>0$ there exists a control function $v\in L^2(\mathcal O\times(0,T))$ such that the solution to \eqref{heat_nl} satisfies $y(x,T)=0$.
\end{theorem}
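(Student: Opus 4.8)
The plan is to reduce the semilinear problem to a family of linear ones by freezing the nonlinearity and then to close the argument with a fixed point theorem, in the spirit of \cite{fabre1995approximate,fernandez1997null,zuazua1991exact}. Since $f\in C^1(\R)$ is globally Lipschitz with $f(0)=0$, I would first write $f(y)=g(y)\,y$, where
\begin{align*}
g(s):=\begin{cases} f(s)/s, & s\neq 0,\\ f'(0), & s=0, \end{cases}
\end{align*}
and observe that $g\in C(\R)\cap L^\infty(\R)$ with $\norm{g}{\infty}\leq L$, $L$ being the Lipschitz constant of $f$. In this way \eqref{heat_nl} takes the form of a nonlocal heat equation perturbed by a bounded zero-order potential $a(x,t):=-g(y(x,t))$, with $\norm{a}{\infty}\le L$ uniformly.

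Given $z\in L^2(Q)$, I would then consider the \emph{linear} controlled problem obtained by freezing the potential, i.e. replacing $g(y)$ by $g(z)$, and I would control it to rest. The corresponding adjoint equation differs from \eqref{e11s1} only by the additional local term $-g(z)\varphi$, which is bounded by $L$. Repeating the proof of Proposition \ref{carleman_phi_prop}, this term produces on the right-hand side of \eqref{carleman_prel} a contribution $\intd_Q e^{-2s\alpha}|g(z)\varphi|^2\leq L^2\intd_Q e^{-2s\alpha}|\varphi|^2$, which is of exactly the same nature as the nonlocal one and can therefore be absorbed by the term $s^3\lambda^4\intd_Q e^{-2s\alpha}\xi^3|\varphi|^2$ in $\mathcal I(\varphi)$ as soon as $s\geq C\,L^{2/3}T^2$. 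Consequently, one obtains an observability inequality of the form \eqref{e12s1} with the constant replaced by
\begin{align*}
\frac{C_1}{T}\exp\left[C_2\left(1+\mathcal K^{2/3}+L^{2/3}+\frac1T\right)\right],
\end{align*}
which is \emph{uniform in} $z$ because $L$ is a fixed constant. In turn, this yields null controllability of the frozen linear problem together with a cost bound $\norm{v}{L^2(\mathcal O\times(0,T))}\leq C_L\norm{y_0}{L^2(\Omega)}$, with $C_L$ independent of $z$.

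With this uniform estimate at hand, I would define the map $\Lambda$ sending $z$ to the solution $y$ of the frozen linear problem driven to rest, choosing $v$ as the minimal-norm (HUM) control so that $\Lambda$ is single-valued. The uniform cost bound, combined with the classical energy estimates, shows that $\Lambda$ maps the ball $B_R:=\{z\in L^2(Q)\,:\,\norm{z}{L^2(Q)}\le R\}$ into itself for $R$ large enough, and that $\Lambda(B_R)$ is bounded in $L^2(0,T;H^1_0(\Omega))\cap H^1(0,T;H^{-1}(\Omega))$; hence, by Aubin--Lions' theorem (\cite{simon1986compact}), $\Lambda(B_R)$ is relatively compact in $L^2(Q)$. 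A continuity argument analogous to the one carried out for $\Lambda$ in the proof of Theorem \ref{control_thm2} (passing to the limit in the frozen system through the weak/strong convergences furnished by the energy and cost estimates) shows that $\Lambda$ is continuous, so that Schauder's fixed point theorem provides $z=\Lambda(z)=:y$, which is precisely a controlled trajectory of \eqref{heat_nl} with $y(x,T)=0$.

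The main obstacle is the \emph{uniformity} of the controllability cost with respect to the frozen state $z$: everything hinges on the fact that, although the potential $a=-g(z)$ changes with $z$, its $L^\infty$ norm is controlled once and for all by the Lipschitz constant $L$, so that the threshold $s\geq C\,L^{2/3}T^2$ and the resulting observability constant do not depend on $z$. This is exactly the mechanism behind the appearance of the exponent $2/3$ already noted after \eqref{cost_est}, and it is what distinguishes the globally Lipschitz case from a merely locally Lipschitz one, where the potential could grow without bound and the fixed point would fail to be stable. A secondary point requiring care is the continuity (or the closed-graph property) of $\Lambda$, which is handled as in Theorem \ref{control_thm2}.
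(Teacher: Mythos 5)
Your proposal is correct and follows essentially the same route as the paper's sketch: the same decomposition $f(y)=g(y)y$ with $\norm{g}{\infty}\leq L$, linearization by freezing the potential, an observability estimate uniform in the frozen state obtained by absorbing the bounded zero-order term in the Carleman inequality, and a Schauder fixed point via the compactness furnished by \eqref{y_reg} and Aubin--Lions. The only (harmless) discrepancy is quantitative: the paper's observability constant \eqref{obs_nl} also carries a $T\norm{g}{\infty}$ term arising from the energy-estimate step, which your constant omits, but since $L$ is a fixed constant this affects neither the uniformity in $z$ nor the fixed-point argument.
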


\begin{proof}[Sketch of the proof]
The proof of this result is by now standard and it uses well-known results on the controllability of nonlinear systems (\cite{fabre1995approximate,fernandez1999approximate,fernandez1997null,zuazua1991exact}). For the sake of completeness, we sketch below the main steps. 	
	
Since $f\in C^1(\R)$, we can introduce the function $g:\R\to\R$ defined as
\begin{align*}
	g(s):=\begin{cases}
		\displaystyle \frac{f(s)}{s}, & \textrm{ if } s\neq 0
		\\[6pt]
		f'(0), & \textrm{ if } s=0.
	\end{cases}	
\end{align*}
Then, for all $\eta\in L^2(Q)$ we can consider the following linearized version of \eqref{heat_nl}
\begin{align}\label{heat_lin}
	\begin{cases}
		\D y_t - \Delta y + \int_\Omega K(x,\theta,t)y(\theta,t)\,d\theta = g(\eta)y + v\mathbf{1}_{\mathcal O}, & (x,t)\in Q
		\\
		y = 0, & (x,t)\in\Sigma
		\\
		y(x,0) = y_0(x), & x\in\Omega.
	\end{cases}
\end{align}
	
The continuity of $f$ and the density of $C_0^\infty(Q)$ in $L^2(Q)$ allows to see that $g(\eta)\in L^\infty(Q)$ for all $\eta\in L^2(Q)$. Therefore, arguing as in the proof of Theorem \ref{th2s1} we can obtain the following observability estimate 
\begin{align}\label{obs_nl}
	\norm{\varphi(x,0)}{L^2(\Omega)}\leq C_1\exp\left[C_2\left(1 + \frac 1T + T\norm{g}{\infty} + \norm{g}{\infty}^{\frac 23} + \mathcal K^{\frac 23}\right)\right] \intd_{\mathcal O\times(0,T)} |\varphi|^2\,dxdt, 
\end{align}
where $\varphi$ is the solution to the adjoint system associated to \eqref{heat_lin}. This in particular implies that \eqref{heat_lin} is null-controllable in time $T>0$ with a control $v_\eta\in L^2(\mathcal O\times(0,T))$ satisfying
\begin{align*}
	\norm{v_\eta}{L^2(\mathcal O\times(0,T)}\leq\sqrt{\mathcal C}\norm{y_0}{L^2(\Omega)}, \;\;\; \forall\,\eta\in L^2(Q),
\end{align*}
where with $\mathcal C$ we indicate the constant in the inequality \eqref{obs_nl}. 
	
Consider the map $\Lambda: L^2(Q)\to L^2(Q)$ defined by $\Lambda\eta = y_\eta$, where $y_\eta$ is the solution to \eqref{heat_lin} corresponding to the control $v_\eta$. By means of \eqref{y_reg}, we deduce that $\Lambda$ maps $L^2(Q)$ into a bounded set of $L^2(0,T,H^1_0(\Omega))\cap H^1(0,T,H^{-1}(\Omega))$. This space being compactly embedded in $L^2(Q)$, there exists a fixed compact set $W$ such that $\Lambda(L^2(Q))\subset W$. Moreover, it can be readily verified that $\Lambda$ is also continuous from $L^2(Q)$ into $L^2(Q)$. In view of that, applying the Schauder fixed point theorem and proceeding like in the conclusion of the proof of Theorem \ref{control_thm2}, the result follows immediately.
\end{proof}

We conclude this section by mentioning that, in the same spirit of Theorem \ref{control_thm_nl}, it is possible to address also more general versions of \eqref{heat_nl} in which the nonlinearity is included in the non-local term, that is, problems in the form
\begin{align}\label{heat_nl_gen}
	\begin{cases}
		\D y_t - \Delta y + \int_\Omega K(x,\theta,t)f\big[y(\theta,t)\big]\,d\theta = v\mathbf{1}_{\mathcal O}, & (x,t)\in Q
		\\
		y = 0, & (x,t)\in\Sigma
		\\
		y(x,0) = y_0(x), & x\in\Omega,
	\end{cases}
\end{align}
where $f$ has the same regularity properties as in Theorem \ref{control_thm_nl}. 

For doing that, we just need to prove a Carleman estimate for the linearized adjoint system corresponding to \eqref{heat_nl_gen}, which reads as
\begin{align}\label{heat_nl_gen_adj}
	\begin{cases}
		\D -\varphi_t - \Delta \varphi + g(\eta)\int_\Omega K(\theta,x,t)\varphi(\theta,t)\,d\theta = 0, & (x,t)\in Q
		\\
		\varphi = 0, & (x,t)\in\Sigma
		\\
		\varphi(x,0) = \varphi_T(x), & x\in\Omega,
	\end{cases}
\end{align}
to obtain from there an observability inequality for \eqref{heat_nl_gen_adj} and conclude by following the same argument as in Theorem \ref{control_thm_nl}.

The proof of such Carleman inequality is a straightforward adaptation of Proposition \ref{carleman_phi_prop}. Indeed, it is sufficient to notice that, in this case, \eqref{carleman_prel} becomes
\begin{align*}
	\mathcal{I}(\varphi) &\leq C \left[s^3\lambda^4\intd_{\mathcal O\times(0,T)} e^{-2s\alpha}\xi^3 |\varphi|^2\,dx\,dt + \intd_Q e^{-2s\alpha}\left|g(\eta)\int_{\Omega} K(\theta,x,t)\varphi(\theta,t)\,d\theta\,\right|^2\,dx\,dt\right]
	\\
	&\leq C \left[s^3\lambda^4\intd_{\mathcal O\times(0,T)} e^{-2s\alpha}\xi^3 |\varphi|^2\,dx\,dt + \norm{g}{\infty}^2\intd_Q e^{-2s\alpha}\left|\int_{\Omega} K(\theta,x,t)\varphi(\theta,t)\,d\theta\,\right|^2\,dx\,dt\right],
\end{align*}
since $g(\eta)\in L^\infty(Q)$. From here, the remaining of the proof is the same as we did before, with the only change that we now have to choose 
\begin{align*}
	s\geq \varrho_3\left[T+T^2+\left(\mathcal K\norm{g}{\infty}\right)^{\frac 23}T^2\right],
\end{align*}
with $\varrho_3$ a positive constant only depending on $\Omega$ and $\mathcal O$. In view of that, the observability estimate that we obtain is in the form
\begin{align}\label{obs_nl2}
	\norm{\varphi(x,0)}{L^2(\Omega)}\leq C_1\exp\left[C_2\left(1 + \frac 1T + T\left(\mathcal K\norm{g}{\infty}\right)^{\frac 23}\right)\right] \intd_{\mathcal O\times(0,T)} |\varphi|^2\,dxdt.
\end{align}

From \eqref{obs_nl2}, the null controllability in time $T>0$ for \eqref{heat_nl_gen_adj} follows immediately by means of a classical argument.

\section{On the necessity of hypothesis \eqref{K_est_gen}}\label{comments_sec}

As anticipated in Remark \ref{rem_lambda}, hypotheses on the kernel $K$ more general that just being bounded are necessary. Indeed, not imposing any assumption further than $\norm{K}{\infty}<+\infty$ may lead to the failure of the unique continuation of the solutions to the adjoint system. In fact, in absence of additional conditions on the kernel, it is possible to provide counterexamples where unique continuation (which is essential for obtaining controllability results) fails.

In what follows, we present one which has been proposed by P. Gerard. Let us consider the following one-dimensional situation. Let $u\in C_0^\infty(0,1)$ be a function verifying $u(x)=0$ for $x\in(a,b)\subset(0,1)$ but not identically zero on the whole interval $(0,1)$ (see Figure \ref{figure_u}). 
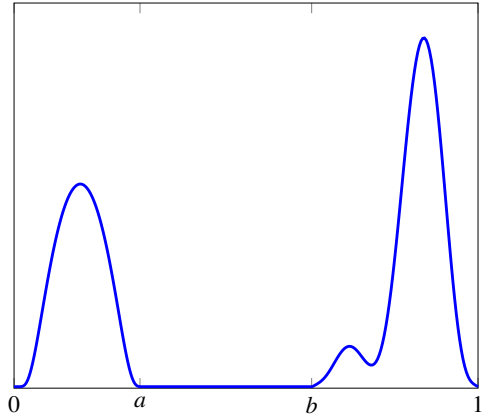
\begin{figure}[h]
	\centering 
	\pgfplotstableread{./data.txt}{\datos}
	\begin{tikzpicture}[scale=0.9]
	\begin{axis}[xmin = -4, xmax = 3, ymin=-0.01,xtick={-4,-2.1,0.49,3}, ytick=\empty, xticklabels={$0$,$a$,$b$,$1$}]
	\addplot [solid,very thick, color=blue] table[x=0,y=1]{\datos};
	\end{axis}
	\end{tikzpicture}\caption{Example of a function $u(x)$ verifying: (i) $u\in C_0^\infty(0,1)$, (ii) $u(x)=0$ for $x\in(a,b)$ and (iii) $u\not\equiv 0$ in $(0,1).$}\label{figure_u}
\end{figure}	


Since $u$ is in particular a $L^2(0,1)$ function, we can write it in the form
\begin{align*}
	u(x)=\sum_{k\geq 1} c_k\phi_k(x),
\end{align*}
with $\phi_k(x)=\sqrt{2}\sin(k\pi x)$ and $c_k=\langle u,\phi_k\rangle_{L^2(0,1)}$. Moreover, for $0<\lambda<\pi$, we have 
\begin{align*}
	\sum_{k\geq 1}\left(k^2\pi^2-\lambda^2\right)c_k^2 >0
\end{align*}
and, up to a change of variables of the type $u\mapsto\sigma u$, $\sigma>0$, we can assume 
\begin{align*}
	\sum_{k\geq 1}\left(k^2\pi^2-\lambda^2\right)c_k^2 =1.
\end{align*}	
Define
\begin{align*}
	p(x) = \sum_{k\geq 1} \left(k^2\pi^2-\lambda^2\right)c_k\phi_k(x).
\end{align*}

It can be readily checked that $-u_{xx}-\lambda^2 u = p$ is verified in the sense of distributions. Hence $p\in C_0^\infty(0,1)$ with $p(x)=0$ in $(a,b)$, since $u$ has these properties. Moreover, by definition of $p$ we have
\begin{align*}
	\int_0^1 pu\,dx = 1.
\end{align*}
Therefore, $u$ satisfies the nonlocal elliptic problem
\begin{align} 
	\begin{cases}\label{nonlocal_elliptic}
		\D -u_{xx}+\int_0^1 K(x,\theta)u(\theta)\,d\theta = \lambda^2 u, & x\in(0,1) 
		\\
		u(0)=u(1)=0
	\end{cases}
\end{align}
with $K(x,\theta)=p(x)p(\theta)$. Furthermore, by assumption $u(x)=0$ for $x\in(a,b)\subset(0,1)$ but $u\not\equiv 0$ elsewhere. 

In other words, we constructed an example of a function which is solution to \eqref{nonlocal_elliptic} and does not satisfies unique continuation. In addition, this fact can be extended to the parabolic case by means of classical techniques, thus implying the failure of any controllability property for our original equation.

To avoid these kind of situations, and to ensure the validity of unique continuation properties, in some recent works the following solutions have been proposed:
\begin{itemize}
	\item analyticity assumptions on the kernel, that allow to see it as a compact perturbation of the heat operator (\cite{fernandez2016null});
	\item kernels in the form $k(x,\theta)=\alpha(x)\beta(\theta)$, with $\alpha$ not vanishing in any subinterval of $(0,1)$, which preserve the unique continuation for the solutions (\cite[Lemma 2.15]{micu2018local}).
\end{itemize}

In the present work, thanks to \eqref{K_est_gen} and by taking advantage of the weight function $\alpha$, we are able to absorb the nonlocal term in the terms on the left-hand side of \eqref{carleman_prel}, then obtaining a nice Carleman estimate for the solution to \eqref{e11s1}. This, of course, gives us the unique continuation and allow us to prove an observability inequality, therefore a controllability result.
In conclusion, from the discussion above it is clear that kernels that are merely bounded cannot be handled when dealing with controllability problems for nonlocal equations of the type of \eqref{e1s1}. Instead, some additional condition has to be imposed. In this paper, we propose to consider kernels depending also on the time variables, and to exploit the structure of the weights in the Carleman estimate. Nonetheless, we do not know whether our approach is the best possible or if, instead, sharper results can be obtained.

\section*{Acknowledgements} 
This project has received funding from the European Research Council (ERC) under the European Union's Horizon 2020 research and innovation programme (grant agreement No. 694126-DyCon) and from the MTM2017-92996 grant of MINECO (Spain). The work of the first author was partially supported by the Grants MTM2014-52347 of MINECO (Spain) and FA9550-18-1-0242 of AFOSR (U.S.). 

The authors wish to thank Prof. Enrique Zuazua (Universidad Aut\'onoma de Madrid and DeustoTech) for interesting discussions on the topic of this paper. Thanks also to Cristhian Montoya (Universidad Nacional Aut\'onoma de M\'exico) for his valuable comments that helped to improve this manuscript. Finally, a special thanks goes to Prof. Patrick G\'erard (Universit\'e Paris-Sud) for suggesting the nice counterexample that helps confirming the necessity of certain hypothesis of our work.

\bibliography{biblio}

\end{document}